\journalname{}
\newtheorem{thm}{Theorem}[section]
\newtheorem{lem}{Lemma}[section]
\newtheorem{rem}{Remark}[section]
\def\be{\begin{eqnarray}}
\def\ee{\end{eqnarray}}
\def\ben{\begin{eqnarray*}}
\def\een{\end{eqnarray*}}
\def\ba{\begin{array}}
\def\ea{\end{array}}
\def\bi{\begin{itemize}}
\def\ei{\end{itemize}}
\newcommand{\R}{\mathbb R}
\begin{document}

\title{A splitting-based KPIK method for eddy current optimal control problems in an  all-at-once approach\thanks{The work is supported by the National Natural Science Foundation of China (Grant Nos. 12126344, 12126337 and 11901324) and the China Scholarship Council (No. 202308350044).
}}
\titlerunning{A splitting-based KPIK method}
\author{Min-Li Zeng $^{1,2,*}$ \and Martin Stoll$^{2}$ \thanks{$ ^*$ Corresponding author: zengminli@ptu.edu.cn, zenm@hrz.tu-chemnitz.de}}
\institute{1 \ Fujian Key Laboratory of Financial Information Processing, Putian University, Putian, 351100, China.\\
 2 \ Faculty of Mathematics, Technische Universit$\ddot{a}$t Chemnitz, Chemnnitz, D-09107, Germany. }
\date{ }

\maketitle

\begin{abstract}
In this paper, we focus on efficient methods to fast solve discretized linear systems obtained from eddy current optimal control problems in an all-at-once approach. We construct a new low-rank matrix equation method based on a special splitting of  the coefficient matrix and the Krylov-plus-inverted-Krylov (KPIK) algorithm.  Firstly, we rewrite the resulting discretized linear system in a matrix-equation form. Then using the  KPIK algorithm,  we can obtain the low-rank approximation solution. The new method is named the splitting-based Krylov-plus-inverted-Krylov (SKPIK) method. The SKPIK method can not only solve the large and sparse discretized systems fast but also overcomes the storage problem. Theoretical results about the existence of the low-rank solutions are given. Numerical experiments are used to illustrate the performance of the new low-rank matrix equation method by compared with some existing classical efficient methods.
\end{abstract}

\keywords{Eddy Current Problems\and PDE-constrained Optimization\and
Low-rank Approximation\and All-at-once Approach\and Matrix Splitting }

\subclass{65F10 \and  65F50  \and 93C20 }

\section{Introduction}\label{sec:intro}

In this work, we are interested in the numerical solution of the distributed optimal control problem: Find the state $y$ and the control $u$ that minimizes the cost functional
\be\label{eddy-eq:1}
J(y,u)=\frac{1}{2}\int_0^T\int_{\Omega}|y-y_d|^2dxdt+\frac{\beta}{2}\int_0^T\int_{\Omega}|u|^2dxdt,
\ee
subject to
\be\label{eddy-eq:2}
\left\{
\begin{array}{rlcl}
\sigma\frac{\partial}{\partial t}y+\textbf{curl}(\nu \textbf{curl}y)&=u, &\text{in}&\Omega\times (0,T),\\
y\times n&=0,& \text{on}& \partial\Omega\times (0,T),\\
y&=y_0,& \text{on}& \Omega\times \{0\},\\
\end{array}
\right.
\ee
where $y_d$ is the desired state, $\sigma\in L^{\infty}(\Omega)$ denotes the conductivity, $\nu\in L^{\infty}(\Omega)$ is the reluctivity parameter  and $\beta>0$ is a cost parameter. We assume that $\Omega\in \mathbb{R}^2$ or $\mathbb{R}^3$ is a bounded Lipschitz domain.

The solution of eddy current optimal control problems is one of the most interesting and demanding problems in applied mathematics and scientific computing. Eddy current optimal control problems are modeled by Maxwell's equations \cite{kolmbauer2013efficient,kolmbauer2011frequency}. Typically, this category of issues encompasses a set of interrelated variable functions, such as the electromagnetic force, electric current, and an associated adjoint variable function. Over the last decades, the numerical solution of the eddy current optimal control problems has received lots of attention \cite{axelsson2019note,axelsson2017preconditioners,axelsson2019preconditioning,bachinger2006efficient}.

When the eddy current optimal control problem is  time-harmonic$\setminus$time-periodic, Kolmbauer and Langer used the multi-harmonic finite element method (FEM) to discretize the problem in \cite{kolmbauer2011frequency,kolmbauer2012robust}  and then constructed a robust preconditioned minimum residual (MINRES) solver for the corresponding disretized linear system. In contrast to previous approaches, 
Kolmbauer further derived the problem setting in a mixed variational formulation under the cases of different control and observation domains, observation at a final time, constraints to the control and the state and observation in certain energy spaces. More details can be found in  \cite{kolmbauer2013robust,kolmbauer2013efficient,bachinger2006efficient}. During the past two decades, more and more research concerning the discretization systems and the corresponding preconditioned methods has been carried out. For example, in \cite{wolfmayr2023posteriori}, Wolfmayr  presents the multi-harmonic analysis and derivation of functional type a posteriori estimates of a distributed eddy current optimal control problem. Axelsson and co-authors presented the efficient preconditioners for the time-harmonic optimal control eddy-current problems in \cite{axelsson2017preconditioners} and for eddy-current optimally controlled time-harmonic electromagnetic problem in \cite{axelsson2019preconditioning}, respectively. We refer to \cite{buffa2000justification,axelsson2019note,zeng2021respectively} and references therein  for more efficient preconditioning techniques for the time-harmonic$\setminus$time-periodic eddy current optimal control problems.

When the eddy current problems are not time-harmonic$\setminus$time-periodic, then the first-order temporal derivative is  complicated. Because the classical time-stepping method, i.e., solving the partial differential equations (PDEs) one-time step after one-time step, would be time-consuming if the number of time steps is large. Therefore, the so-called all-at-once approach technique \cite{hinze2008hierarchical,hinze2012space,benzi2011preconditioning,stoll2013all} becomes popular. The all-at-once approach involves discretizing the relevant issue within the space-time continuum and resolving it for all temporal intervals simultaneously. A key benefit of this method is the deferment of meeting the optimal control problem criteria until the entire system converges. Nonetheless, a significant drawback is the resultant large-scale systems inherent to the all-at-once approach. Hence, developing efficient algorithms for  fast solving large-scale systems has always been a research focus.

The first motivation for fast solving large-scale systems is the development of parallel-in-time (PinT) methods for
evolutionary PDEs \cite{lions2001resolution}.  Among these, we mention the parareal algorithm \cite{maday2002parareal} and a closely related algorithm multigrid-reduction-in-time (MGRiT) algorithm \cite{falgout2014parallel}, which attracted considerable attention in recent years. 
The convergence properties of the parareal algorithm and MGRiT are discussed in \cite{maday2002parareal,gander2007analysis}. Many
efforts are devoted to improving these two PinT algorithms, and in particular the
authors \cite{wu2018toward,wu2019acceleration} proposed a novel coarse grid correction, which shows great
potential for increasing the speedup according to the numerical results in \cite{kwok2019schwarz}. There
are also many other PinT algorithms with completely different mechanisms from the
parareal algorithm and MGRiT, such as the space-time multigrid algorithms \cite{gander2016analysis,horton1995space} and the diagonalization-based all-at-once algorithms \cite{gander2016direct,mcdonald2018preconditioning,goddard2018note}. For an
overview, we refer the interested reader to \cite{gander201550}. However, in contrast to non-PinT techniques, a limitation of PinT methods is their substantial demand for experimental resources, such as a large array of processors, leading to increased experimental expenses. Additionally, PinT methods continue to face storage challenges due to the demands of space-time discretization.

To solve large-scale systems based on an all-at-once approach efficiently, people usually consider using iteration methods, for example, Krylov subspace methods combined with efficient preconditioners. Much progress has been made in recent years concerning  efficient preconditioners for the all-at-once-based discretized systems for different applications. In  \cite{rees2010all}, Rees and co-authors  discussed implementation details on the all-at-once approach with different boundary conditions. McDonald, Pestana, and Wathen established a block circulant preconditioner for the all-at-once evolutionary PDE system  in \cite{mcdonald2018preconditioning}. More applications can be found in \cite{haber2001preconditioned,zhao2021preconditioning} and references therein.

However, it is easy to see that the models based on the all-at-once approach result in millions of variables. Therefore,  it needs huge memory to store the large and dense solution computed by standard algorithms.  Nevertheless, neither PinT methods nor the previously described techniques can overcome the existing memory limitations. Ultimately, due to their size and complexity, solving such huge models is still a challenging task.

Therefore, efficient solvers based on low memory requirements are desirable. Recently, B\"{u}nger and co-authors proposed a new algorithm framework \cite{bunger2021low}, which can be used to solve PDE-constrained optimization problems efficiently.  To overcome the obstacles brought out by the storage requirement of vast dimensionality, one can  reformulate the  Karush-Kuhn-Tucker (KKT) system  into a Sylvester-like matrix equation \cite{stoll2015low}, which would provide an efficient representation with a minimal amount of storage.  Then based on this reformulation, we can search for the low-rank approximation solution by using the idea of low-rank in time technique,  which can be found in \cite{markovsky2012low}.  The main idea of the low-rank solver is to compute a projection of the solution onto a small subspace. Hence, the low-rank solver is similar to model order reduction (MOR) approaches. However, in contrast to classical MOR schemes,  the low-rank solvers do not need to compress the full solution at the end of the algorithm but start with low-rank data and maintain this form throughout the iteration. Hence, more and more attention has been paid to the low-rank solvers. For example, the low-rank approximation solution algorithm has been used for solving the optimal control problem constrained with a forward Navier-Stokes equation by Dolgov in \cite{dolgov2017low}, the optimal control problem constrained with unsteady Stokes-Brinkman equations by  Benner and co-authors  in \cite{benner2015low} and  two-dimensional time-dependent Navier-Stokes equations with a stochastic by Benner and co-authors in \cite{benner2016low}. More low-rank approximation algorithms for optimization of large-scale systems have been studied in existing papers, and we refer the readers to \cite{benner2013low,benner2016block,cichocki2017tensor,dolgov2016fast,dolgov2017low,benner2023low} for recent accounts.

In this work, we  focus on the efficient algorithms based on the low-rank matrix equation method for solving the eddy current optimal control problems by an all-at-once approach.  Our approach follows the work presented in \cite{bunger2021low}. Before rewriting the all-at-once discretized system as a matrix equation, we make a matrix-splitting based on the special structure of the coefficient matrix. Following this splitting, we can obtain an equivalent matrix equation. We  reduce the system of the matrix equation into a  low-rank reduced matrix approximation equation under an appropriate approximation space. For the resulting small-size matrix equation, we  use the KPIK algorithm and then obtain the approximate solution.

The organization of the paper is as follows. In Section~\ref{sec:2}, we present the discretized linear system of the eddy current optimization problems \eqref{eddy-eq:1}-\eqref{eddy-eq:2} based on an all-at-once approach. In Section~\ref{sec:3}, we describe the MOR system and propose the new low-rank matrix equation method. Theoretical results  are investigated in detail in Section~\ref{sec:4}. Numerical experiments are carried out in Section~\ref{sec:5} to show the effectiveness of the new method. Finally, we draw some brief concluding remarks in Section~\ref{sec:6}.

The following notation is used throughout. Range($V$) denotes the space spanned by the columns of $V$. We use the notation $\|\cdot\|$  and $\|\cdot\|_F$ to indicate the 2-norm and the Frobenius norm  for vectors and the induced norm for matrices. The dimension of the identity matrix $I$ and the zero matrix $O$ will be omitted when it is easy to distinguish.

\section{Discretization and Optimization} \label{sec:2}

In this section, we make a detailed description of the eddy current optimization problem and then present the discretization of the all-at-once approach of the model.

We assume the control $u$ is weakly divergence-free.  The reluctivity $\nu$ is supposed to be uniformly positive and independent of $|\textbf{curl} y|$, i.e., we assume that the eddy current problem \eqref{eddy-eq:2} is linear.  Eddy current problems are essentially different for conducting ($\sigma>0$) and nonconducting regions ($\sigma=0$).

To gain uniqueness in the nonconducting regions, we need to regularize the state equation \eqref{eddy-eq:2} by introducing formal regularization operators ($i=1,2,3$) as: 

\[\mathcal{R}_i(\sigma):=
\left\{
\begin{array}{lr}
\sigma,&i=0,\\
\sigma,&i=1,\\
\max(\sigma,\varepsilon),&i=2,\\
\sigma,&i=3,
\end{array}
\right.\qquad\qquad
\mathcal{Q}_i(y):=
\left\{
\begin{array}{lrl}
0,&i=0&(\text{no\ regularization}),\\
Q(y),&i=1&(\text{exact}),\\
0,&i=2&(\text{conductivity}),\\
\varepsilon y,&i=3&(\text{elliptic}),
\end{array}
\right.
\]
where $i=0$ refers to without any regularization. Hence, the regularized problem can be stated as
\[
J(y,u)=\min_{(y,u)}\frac{1}{2}\int_0^T\int_{\Omega}|y-y_d|^2dxdt+\frac{\beta}{2}\int_0^T\int_{\Omega}|u|^2dxdt,
\]
subject to
\be\label{eddy-eq:3}
\left\{
\begin{array}{rlcl}
\mathcal{R}_i(\sigma)\frac{\partial y}{\partial t}+\textbf{curl}(\nu \textbf{curl}y)+\mathcal{Q}_i(y)&=u, &\text{in}&\Omega\times (0,T),\\
y\times n&=0,& \text{on}& \partial\Omega\times (0,T),\\
y&=y_0,& \text{on}& \Omega\times \{0\}.\\
\end{array}
\right.
\ee

Construct the Lagrangian functional
\[\mathcal{L}(y,u,p)=J(y,u)+\frac{1}{2} \int_{0}^T \int_{\Omega}(\mathcal{R}_i(\sigma)\frac{\partial y}{\partial t}+\textbf{curl}(\nu \textbf{curl}y)+\mathcal{Q}_i(y)-u)pdxdt\]
and obtain the necessary optimality conditions
\[
\left\{
\begin{array}{rl}
\nabla_p\mathcal{L}(y,u,p)&=0,\\
\nabla_y\mathcal{L}(y,u,p)&=0,\\
\nabla_u\mathcal{L}(y,u,p)&=0.\\
\end{array}
\right.
\]
It follows the system of PDEs: Find the state $y$, the costate $p$, and the control $u$ such that
\[
\left\{
\begin{array}{rlcl}
\mathcal{R}_i(\sigma)\frac{\partial y}{\partial t}+\textbf{curl}(\nu \textbf{curl}y)+\mathcal{Q}_i(y)-u&=0, &\text{in}&\Omega\times (0,T),\\
-\mathcal{R}_i(\sigma)\frac{\partial p}{\partial t}+\textbf{curl}(\nu \textbf{curl}p)+\mathcal{Q}_i(p)+y-y_d&= 0, &\text{in}&\Omega\times (0,T),\\
u&=\frac{1}{\beta}p,& \text{in}&\Omega\times (0,T),\\
y\times n&=0,& \text{on}& \partial\Omega\times (0,T),\\
p\times n&=0,& \text{on}& \partial\Omega\times (0,T),\\
y&=y_0,& \text{on}& \Omega\times \{0\},\\
p&=0,& \text{on}& \Omega\times \{0\}.\\
\end{array}
\right.
\]

Next, we discretize space and time using an all-at-once approach. We use the equal-order finite elements with spatial grid numbers being $N+1$ and to discretize the space. The temporal discretization is done with the time interval being split into $m_T$ intervals of length $\tau = \frac{T}{m_T}$. Using a rectangle rule, the discretization of \eqref{eddy-eq:1} and \eqref{eddy-eq:3} lead to
	\begin{equation}\label{eddy-eq:4}
	 \sum_{m=1}^{m_T}  \frac{\tau}{2} (y_{m} - y_{d,m})^T M (y_{m} - y_{d,m}) + \frac{\tau \beta}{2} u_m^T M u_m,
	\end{equation}
	where $y_m, y_{d,m}$ and $u_m$ are spatial discretizations of $y,y_d$ and $u$ of size $n$ for each time step $m = 1,\hdots,m_T$. Using an implicit Euler-scheme the discrete formulation of  \eqref{eddy-eq:2} will lead to
	\begin{align*}
	 \frac{M_{\sigma} (y_{m} - y_{m-1})}{\tau} + K y_m &= M u_m, \mbox{ for } m = 1,\hdots, m_T,\\
	 y_m\times n &= 0,
	\end{align*}
where the mass matrix $M\in \R^{n\times n}$, the conductivity matrix $M_{\sigma}\in \R^{n\times n}$ and stiffness matrix $K \in \R^{n\times n}$  arise from the finite element discretization of the following bilinear forms \cite{elman2005}:
\[M: \int_{\Omega}u\cdot vdx,\qquad M_{\sigma}: \int_{\Omega}\mathcal{R}_i(\sigma)u\cdot vdx, \]
\[K: \int_{\Omega}\nu\textbf{curl}u\cdot\textbf{curl}vdx+\int_{\Omega}\mathcal{Q}_i(u)vdx.\]
It then follows, for $m=1,2,\hdots, m_T$,
\[M_{\sigma}(y_m-y_{m-1})+\tau Ky_m-\tau Mu_m=0,\]
i.e.,
\[(M_{\sigma}+\tau K)y_m- M_{\sigma} y_{m-1}-\tau Mu_m=0.\]

By collecting  the discretizations of the variables in matrices $Y = [y_1,\hdots,y_{m_T}] \in \R^{n \times m_T}$ and  denote their vectorization by $\underline{Y}= \mbox{vec}(Y)$. The denotations $\underline{Y_d}$ and $\underline{U}$ are used respectively for $y_d$ and $u$. Therefore, we can rewrite the optimization problem in compact form as
	\[
	\min_{Y,U} \frac{\tau}{2} (\underline{Y}-\underline{Y_d})^T \mathcal{M} (\underline{Y}-\underline{Y_d}) + \frac{\tau \beta}{2} \underline{U}^T \mathcal{M} \underline{U},   \]
	such that
\[ \mathcal{N}_{\sigma}\underline{Y} - \tau \mathcal{M} \underline{U} = 0, \]
	where
	\begin{align*}
	 \mathcal{M} = \begin{bmatrix} M & & & \\ & M & & \\ & & \ddots & \\ & & & M \end{bmatrix},
\qquad
\mathcal{N}_{\sigma} = \begin{bmatrix} M_{\sigma} + \tau K & & & \\ -M_{\sigma} & M_{\sigma} + \tau K & & \\ & \ddots & \ddots \\ & & -M_{\sigma} & M_{\sigma} + \tau K \end{bmatrix},
	\end{align*}
with the mass matrix $M\in \R^{n\times n}$, the conductivity matrix $M_{\sigma}\in \R^{n\times n}$ and stiffness matrix $K \in \R^{n\times n}$ repeating $m_T$ times each.

Next, we need to address the system of equations derived from the first-order optimality conditions. In \cite{benzi2005numerical}, Benzi and  co-authors stated that an optimal solution must be a saddle point of the Lagrangian of the problem,
	 \[
	  \nabla \mathcal{L}(\underline{Y}^*, \underline{U}^*, \underline{P}^*) = 0.
	 \]
    The Lagrangian of this problem reads
	\[
	\mathcal{L}(\underline{Y},\underline{U},\underline{P}) = \frac{\tau}{2}(\underline{Y}-\underline{Y_d})^T \mathcal{M} (\underline{Y}-\underline{Y_d}) + \frac{\tau \beta}{2} \underline{U}^T \mathcal{M} \underline{U} + \underline{P}^T (\mathcal{N}_{\sigma} \underline{Y} - \tau \mathcal{M} \underline{U}).
	\]
	Thus, the optimal solution solves the following set of linear equations,
\begin{equation}\label{eddy-eq:5}
\left\{
	\begin{array}{rcl}
	\nabla_Y \mathcal{L}(\underline{Y},\underline{U},\underline{P)} &=& \tau \mathcal{M} (\underline{Y} - \underline{Y_d}) + \mathcal{N}_{\sigma}^T \underline{P} = 0, \\
	\nabla_U \mathcal{L}(\underline{Y},\underline{U},\underline{P)} &= &\tau \beta \mathcal{M} \underline{U} - \tau \mathcal{M}^T \underline{P} = 0,  \\
	\nabla_P \mathcal{L}(\underline{Y},\underline{U},\underline{P}) &= &\mathcal{N}_{\sigma} \underline{Y} - \tau \mathcal{M} \underline{U} = 0. 	\end{array}
\right.
\end{equation}
	
By introducing the auxiliary matrices
    \begin{align*}
	 \mathcal{I} = \begin{bmatrix} 1 & & \\ & \ddots & \\ & & 1 \end{bmatrix}\in \R^{m_T\times m_T} \mbox{ and } C = \begin{bmatrix} 1 & & & \\ -1 & 1 & & \\ & \ddots & \ddots & \\ & & -1 & 1 \end{bmatrix}\in \R^{m_T\times m_T},
	\end{align*}
     we can rewrite the above system matrices as Kronecker products $\mathcal{M} = \mathcal{I} \otimes M$ and $\mathcal{N}_{\sigma} = \mathcal{I} \otimes \tau K + C \otimes M_{\sigma}$. With this, our system of KKT conditions \eqref{eddy-eq:5}  becomes
     \begin{equation}\label{eddy-eq:6}
     \left\{
    \begin{array}{rcc}
    \tau (\mathcal{I} \otimes M) \underline{Y} + (\mathcal{I} \otimes \tau K^T + C^T \otimes M_{\sigma}^T) \underline{\Lambda} - \tau (\mathcal{I} \otimes M) \underline{Y_d} &=&0, \\
    \tau \beta (\mathcal{I} \otimes M) \underline{U} - \tau (\mathcal{I} \otimes M^T) \underline{\Lambda} &= &0, \\
    (\mathcal{I} \otimes \tau K + C \otimes M_{\sigma}) \underline{Y} - \tau (\mathcal{I} \otimes M) \underline{U} & =& 0.
    \end{array}
    \right.
    \end{equation}
	Because the mass matrix $M$ arising from a standard Galerkin method is always symmetric and positive definite, i.e., $M=M^T$ and $M^{-1}$ exists. Therefore, we can eliminate the second equation in Equation \eqref{eddy-eq:6} by setting
 $U = \frac{1}{\beta}\Lambda$ in the remaining two equations. It results in the following equations:
      \begin{equation}\label{eddy-eq:7}
    \begin{pmatrix}
    \tau \mathcal{M}&\sqrt{\beta}\mathcal{N}_{\sigma} ^T\\
    \sqrt{\beta}\mathcal{N}_{\sigma} &-\tau\mathcal{M}
    \end{pmatrix}
    \begin{pmatrix}
     \underline{Y}\\ \frac{1}{\sqrt{\beta}}\underline{\Lambda}
    \end{pmatrix}=
    \begin{pmatrix}
     \tau (\mathcal{I} \otimes M) \underline{Y_d}\\0
    \end{pmatrix}.
     \end{equation}

\section{The splitting-based KPIK method}\label{sec:3}

In this section, we present the splitting-based KPIK method for the linear system \eqref{eddy-eq:7}. By substituting $\mathcal{M}= I\otimes M$,  $\mathcal{N}_{\sigma} = \mathcal{I} \otimes \tau K + C \otimes M_{\sigma}$ and $M_{\sigma}=\sigma M$, and $K=K^T$, we can rewrite the linear system \eqref{eddy-eq:7} as
      \begin{equation*}
    \begin{pmatrix}
    \tau \mathcal{I}\otimes M&\sqrt{\beta}(\mathcal{I} \otimes \tau K + C \otimes \sigma M)^T\\
    \sqrt{\beta}(\mathcal{I} \otimes \tau K + C \otimes \sigma M) &-\tau \mathcal{I}\otimes M
    \end{pmatrix}
    \begin{pmatrix}
     \underline{Y}\\ \frac{1}{\sqrt{\beta}}\underline{\Lambda}
    \end{pmatrix}=
    \begin{pmatrix}
     \tau (\mathcal{I} \otimes M) \underline{Y_d}\\0
    \end{pmatrix},
     \end{equation*}
i.e.,
      \begin{equation*}
      \Big(
    \begin{pmatrix}
    \tau \mathcal{I}\otimes M&\sqrt{\beta} C^T \otimes \sigma M\\
    \sqrt{\beta} C \otimes \sigma M &-\tau \mathcal{I}\otimes M
    \end{pmatrix}+
        \begin{pmatrix}
   0&\sqrt{\beta}\mathcal{I} \otimes \tau K \\
    \sqrt{\beta}\mathcal{I} \otimes \tau K  &0
    \end{pmatrix}\Big)
    \begin{pmatrix}
     \underline{Y}\\ \frac{1}{\sqrt{\beta}}\underline{\Lambda}
    \end{pmatrix}
    =
    \begin{pmatrix}
     \tau (\mathcal{I} \otimes M) \underline{Y_d}\\0
    \end{pmatrix}.
     \end{equation*}
We can further rewrite the above matrix equation as
 \begin{equation}\label{eddy-eq:8}
      \Big(
    \begin{pmatrix}
    \tau \mathcal{I}& \sigma\sqrt{\beta} C^T  \\
    \sigma\sqrt{\beta} C   &-\tau \mathcal{I}
    \end{pmatrix}\otimes M +
        \begin{pmatrix}
   0&\tau \sqrt{\beta}\mathcal{I} \\
    \tau\sqrt{\beta}\mathcal{I}   &0
    \end{pmatrix}\otimes K\Big)
    \begin{pmatrix}
     \underline{Y}\\ \frac{1}{\sqrt{\beta}}\underline{\Lambda}
    \end{pmatrix}
    =
    \begin{pmatrix}
     \tau (\mathcal{I} \otimes M) \underline{Y_d}\\0
    \end{pmatrix}.
     \end{equation}
      Using the relation
	\[
     (W^T \otimes V) \mbox{vec}(X) = \mbox{vec}(VXW),
    \]
    we rewrite the linear system \eqref{eddy-eq:8} in a matrix-equation form as
   \begin{equation}\label{eddy-eq:matrixeq}
      M X
    \begin{pmatrix}
    \tau \mathcal{I}& \sigma\sqrt{\beta} C^T  \\
    \sigma\sqrt{\beta} C   &-\tau \mathcal{I}
    \end{pmatrix}  + K X
        \begin{pmatrix}
   0&\tau \sqrt{\beta}\mathcal{I} \\
    \tau\sqrt{\beta}\mathcal{I}   &0
    \end{pmatrix}
    =[\tau MY_d\quad 0]\in \R^{n\times 2m_T}.
     \end{equation}
 Multiplying $M^{-1}$ by the left and
    \[\begin{pmatrix}
   0&\tau \sqrt{\beta}\mathcal{I} \\
    \tau\sqrt{\beta}\mathcal{I}   &0
    \end{pmatrix}^{-1}\]
    by the right of \eqref{eddy-eq:matrixeq} on both sides , respectively, we can further obtain
        \begin{equation*}
     M^{-1}K X+ X
    \begin{pmatrix}
    \frac{\sigma}{\tau}C^T& \frac{1}{\sqrt{\beta}}\mathcal{I} \\
     -\frac{1}{\sqrt{\beta}}\mathcal{I} & \frac{\sigma}{\tau} C
    \end{pmatrix}
    =M^{-1}[\tau MY_d\quad 0]\begin{pmatrix}
   0&\frac{1}{\tau \sqrt{\beta}}\mathcal{I} \\
    \frac{1}{\tau \sqrt{\beta}}\mathcal{I}   &0
    \end{pmatrix},
     \end{equation*}
     i.e.,
             \begin{equation}\label{eddy-eq:9}
     M^{-1}K X+ X
    \begin{pmatrix}
    \frac{\sigma}{\tau}C^T& \frac{1}{\sqrt{\beta}}\mathcal{I} \\
     -\frac{1}{\sqrt{\beta}}\mathcal{I} & \frac{\sigma}{\tau} C
    \end{pmatrix}
    =[0\quad \frac{1}{\sqrt{\beta}}Y_d ]:=R \in \R^{n\times 2m_T},
     \end{equation}
where $X=[Y\quad\frac{1}{\sqrt{\beta}}\Lambda]\in \R^{n\times 2m_T}$.

 Next, we focus on the low-rank approximation solution of the Sylvester matrix equation
\begin{equation*}
AX+XB=R,
\end{equation*}
    where the left-hand coefficient matrix $A=  M^{-1}K$ has size $n\times n$ and the coefficient matrix
    \begin{equation}\label{eddy-eq:10}
    B=\begin{pmatrix}
    \frac{\sigma}{\tau}C^T& \frac{1}{\sqrt{\beta}}\mathcal{I} \\
     -\frac{1}{\sqrt{\beta}}\mathcal{I} & \frac{\sigma}{\tau} C
    \end{pmatrix}
    \end{equation}
    has size $2m_T\times 2m_T$,  while the right-hand one has size $n \times 2m_T$, so that $X \in {\mathbb R}^{n\times 2m_T}$.

Suppose that there exists a low-rank approximation of the desired state  as
    \begin{equation*}
     Y_d \approx Y_1 Y_2^T,
    \end{equation*}
    with $Y_1 \in \R^{n \times r}$, $Y_2 \in \R^{m_T \times r}$ and $r < m_T$ of low column and row rank, respectively.   Then the approximate low-rank decomposition of the right-hand side $R\approx R_1R_2^T$ is given by
    \begin{equation}\label{eddy-eq:decomp}
    R_1=\frac{1}{\sqrt{\beta}} Y_1\in {\mathbb R}^{n\times r}\quad \text{and}\quad
    R_2=
    \begin{pmatrix}
    O_{m_T\times r}\\ Y_2
    \end{pmatrix}\in {\mathbb R}^{2m_T\times r},
    \end{equation}
 with $R_1$ and $R_2$ being low column and row rank, respectively. Here, $O_{m_T\times r}$ denotes a  zero matrix of size $m_T\times r$. We will omit the subscript when it is easy to distinguish in the following of this paper.

Because the solution matrix $X \in \R^{n\times 2m_T}$ would be dense and potentially very large, then we need to find an appropriate approximation space to exploit the new setting of the solution matrix.  Suppose that there exists a low-rank reduced matrix approximation $Z\in \R^{p \times 2m_T}$ such that $X \approx V_p Z$, where the orthonormal columns of $V_p \in \R^{n \times p}$ generate the approximation space. Hence, we can construct a reduced version of the matrix equation \eqref{eddy-eq:10}.


Denote the reduced $p\times p$ coefficient matrices as
$A_{r} := V_p^T A V_p$ and set $R_{1,r} = V_p^T R_1 \in \R^{p \times r}$. Then the resulting reduced equation can be written as
    \begin{equation} \label{eddy-eq:11}
    A_{r} Z +   Z B  = R_{1,r} R_2^T,
    \end{equation}
    which has the same structure as the original matrix equation but with its size reduced to $p \times 2m_T$. Using the relation in \eqref{eddy-eq:10}, we get the small linear system of equations
    \begin{equation} \label{eddy-eq:12}
     \big ( (\mathcal{I}_{2m_T} \otimes A_{r}) + (B^T \otimes \mathcal{I}_p)  ) \underline{Z} = \underline{R_{1,r}}\underline{R_2}^T,
    \end{equation}
    with $\underline{Z} = \mbox{vec}(Z)$, $\underline{R_{1,r}}=\mbox{vec}(R_{1,r})$, and  $\underline{R_2}=\mbox{vec}(R_2)$. 
    
    Because of the small subspace size $p \ll n$, we can either use a direct method or an iterative method to solve this system of equations, which is significantly easier to solve than the original system of equations.  If the obtained approximate solution $V_pZ$ is not sufficiently good, then the space can be expanded.  Hence, a new approximation can be constructed, giving rise to an iterative method. More details can be found in \cite{stoll2015low,breiten2016fast}.

 At the end of this section, we give a detailed implementation  of the SKPIK  method in Algorithm \ref{eddy-algo:2}.

  \begin{algorithm}
\caption{The splitting-based KPIK  method}
\label{eddy-algo:2}
\begin{algorithmic}[1]
\STATE Given a tolerance $\epsilon$, a maximum number of iteration $\text{IT}_{max}$, a spatial grid points number $n$, a temporal grid points number $m_T$ and a tolerance for the truncated singular value
decomposition (SVD) $\varepsilon$. The step-size parameter $\tau=\frac{1}{m_T}$.
\STATE Inputs: $M\in \R^{n\times n}$ a mass matrix, $K\in \R^{n\times n}$ a stiffness matrix, $Y_d\in \R^{n\times m_T}$ a given desired state vector,  $\sigma>0$ a conducting constant and $\beta>0$ a cost parameter.
\STATE  Set $A\approx M^{-1}K$, $B$ given by \eqref{eddy-eq:10}, $R\approx R_1R_2^T$ within truncated SVD $\varepsilon$, i.e., dropping the singular values of $R$ smaller than $\varepsilon$ \cite{stoll2015low}. $U_1=gram\_sh(R, A^{-1}R)$, $W_1=gram\_sh(R, B^{-1}R)$,  $U_0=\varnothing$ and $W_0=\varnothing$.
\FOR{$m=1,2,\ldots,\text{IT}_{max}$}
\STATE $U_m=[U_{m-1},U_m]$, $W_m=[W_{m-1},W_m]$.
\STATE Set $T^A_m=W_{m}^TAU_m$, $T^B_m=W_{m}^TBU_m$ and $R_1^{(m)}=U_mR_1$, $R_2^{(m)}=W_m^TR_2$.
\STATE Solve $T_m^AY+Y(T_m^B)^T+R_1R_2^T=0$ and set $Y_m=Y$.
\STATE If converged, the $X_m=U_mY_mW_m^T$ and stop.
\STATE Set $U_m^{(1)}$: first $s$ columns of $U_m$, $U_m^{(2)}$: second $s$ columns of $U_m$; $W_m^{(1)}$: first $s$ columns of $W_m$, $W_m^{(2)}$: second $s$ columns of $W_m$.
\STATE $U_{m+1}'=[AU_m^{(1)},A^{-1}U_m^{(2)}]$ and $W_{m+1}'=[BW_m^{(1)},B^{-1}W_m^{(2)}]$.
\STATE $\hat{U}_{m+1}\leftarrow$ orthogonalize $U_{m+1}'$ w.r.t. $U_m$ and $\hat{W}_{m+1}\leftarrow$ orthogonalize $W_{m+1}'$ w.r.t. $W_m$.
\STATE $U_{m+1}=gram\_sh(\hat{U}_{m+1})$ and $W_{m+1}=gram\_sh(\hat{W}_{m+1})$.
\IF{$\frac{\|A U_mY_m W_m^T + U_mY_m W_m^T B^T + R_1 R_2^T\|}{\|R_1 R_2^T\|}< \epsilon$}
\STATE $X_1=U_mY_m$ and $X_2=W_m$. Break.
\ENDIF
\ENDFOR
\STATE The low-rank approximation solution $X$ is given by $X\approx X_1X_2^T$.\\
 \COMMENT {$\rhd$ The function ``gram\_sh" performs the modified Gram-Schmidt orthogonalization, more details can be found in  \cite{golub2013matrix}. }
\end{algorithmic}
\end{algorithm}

\begin{rem}\label{eddy-rem:1}
Compared with the low-rank MINRES method proposed in \cite{stoll2015low}, the SKPIK  method in this work needs one to get the solution of size $n\times 2m_T$ from the Sylvester equation \eqref{eddy-eq:9} once, while the low-rank MINRES method, preconditioned by  the block diagonal preconditioner with the approximate Schur complement proposed in \cite{stoll2015low}, needs one to obtain the solutions of size $n\times m_T$ from two different Sylvester equations. However, the low-rank MINRES method also needs one to solve  sublinear systems with the coefficient matrices being $\mathcal{M}$ twice. Hence, the SKPIK method has a smaller workload than the low-rank MINRES method.
\end{rem}

\begin{rem}\label{eddy-rem:2}
As the stiffness matrix $K$ is SPSD and the mass matrix $M$ is SPD, then following the idea in \cite{shank2013krylov}, we can also shift the matrix equation $AX+XB=R_1R_2^T$ of \eqref{eddy-eq:9} as a equivalent matrix equation
$(A+s I)X+X(B-s I)=R_1R_2^T$, where $s>0$ is a given shifted parameter, such that $A+s I=M^{-1}(K+sM)$ is better conditioned than the matrix $M^{-1}K$.
\end{rem}

\section{The existence of the low-rank solution}\label{sec:4}
In this section, we  give the existence of the low-rank solution. The existence of the low-rank approximant to a Sylverster equation is given in  the following lemma.

\begin{lem}\label{eddy-lem:1}(Existence of a low rank approximant, Corollary 2 in \cite{grasedyck2004existence})
Let $A\in \mathbb{C}^{n\times n}$ and $B\in \mathbb{C}^{n\times n}$ be matrices with spectrum $\sigma(A)$ and $\sigma(B)$ as in the rectangular case or the triangular case with constant $\mu$ ($\mu\geq1$ in the triangular case),  $\Gamma_A$ and  $\Gamma_B$ are paths of index 1 around the spectrum of $A$ and $B$ with $\Lambda$, $\lambda$, paths $\Gamma_A$, $\Gamma_B$ and a partitioning $\Gamma_B=\dot{\bigcup}_{j=0}^{k_{\sigma}-1}\Gamma_{B,j}$. For all $j=0,1,2,\ldots,k_{\sigma}-1$, let $\eta_j$ be the center of a part $\Gamma_{B,j}$, $\xi\in \Gamma_A$ and $\eta\in \Gamma_{B,j}$. We define
\[g_{i,j}(\xi):=(\xi-\eta_j)^{-1-i}\quad\text{and}\quad h_{i,j}(\eta):=(\eta-\eta_j)^i,\quad 0\leq i<k,\quad 0\leq j<k_{\sigma},\]
and
\[\kappa_A:=\frac{1}{2\pi}\oint_{\Gamma_A}\|(\xi I-A)^{-1}\|_Fd\xi,\quad \kappa_B:=\frac{1}{2\pi}\oint_{\Gamma_B}\|(\eta I-B)^{-1}\|_Fd\eta,\]
\[k_{\varepsilon}:=\Big\lceil\log_2\big(1+\frac{6(\|A\|_F+\|B\|_F)\kappa_A\kappa_B}{\lambda\varepsilon}\big)\Big\rceil, \]
where $\varepsilon\in(0,1)$ is given. Then for each right-hand side matrix $R\in \mathbb{C}^{n\times m}$ of rank $r_R$, the matrix
\[\tilde{X}:=\frac{1}{4\pi^2}\sum_{j=0}^{k_{\sigma}-1}\sum_{j=0}^{k_{\varepsilon}-1}\big(\oint_{\Gamma_A}(\xi I-A)^{-1}g_{i,j}(\xi)d\xi\big)R\big(\oint_{\Gamma_B}(\eta I-B)^{-1}h_{i,j}(\eta)d\eta\big)\]
approximates the solution $X$  to \eqref{eddy-eq:9} by
\[\|X-\tilde{X}\|_F\leq \varepsilon\|X\|_F.\]
Besides, the rank of $\tilde{X}$ is bounded by $r_Rk_{\sigma}k_{\varepsilon}$ with
\begin{equation*}
k_{\sigma}=
\left\{
\begin{array}{lll}
&\mathcal{O}(\frac{\mu}{\lambda}+\log_2(2+\frac{\Lambda}{\lambda}))& (rectangular\ case),\\
&\mathcal{O}(\mu \log_2(2+\frac{\Lambda}{\lambda}))& (triangular case).
\end{array}
\right.
\end{equation*}
\end{lem}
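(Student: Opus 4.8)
The statement is quoted as Corollary~2 of \cite{grasedyck2004existence}, so rather than reprove it from scratch I would reconstruct its argument, which rests on a contour-integral representation of the solution of the Sylvester equation together with a separable (degenerate-kernel) approximation of the Cauchy kernel $1/(\xi-\eta)$. The plan is: (i) write $X$ as a double contour integral over $\Gamma_A\times\Gamma_B$ whose integrand contains the kernel $1/(\xi-\eta)$; (ii) replace that kernel on each subarc $\Gamma_{B,j}$ by the truncated Taylor sum $\sum_{i<k_\varepsilon} g_{i,j}(\xi)h_{i,j}(\eta)$, which makes the integral separate into a sum of $k_\sigma k_\varepsilon$ terms each of rank at most $r_R$; and (iii) estimate the truncation error through the geometric tail of that Taylor series.

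First I would establish the representation. Since $\sigma(A)\cap\sigma(-B)=\varnothing$ — here $A=M^{-1}K$ is similar to an SPSD matrix (hence has nonnegative real spectrum), while $B$ in \eqref{eddy-eq:10} keeps its spectrum off the nonpositive real axis — the unique solution of $AX+XB=R$ can be written, after the sign change $B\mapsto-B$ that aligns the convention with the kernel $1/(\xi-\eta)$, as
\[
X=\frac{1}{(2\pi i)^2}\oint_{\Gamma_A}\oint_{\Gamma_B}\frac{1}{\xi-\eta}\,(\xi I-A)^{-1}R\,(\eta I-B)^{-1}\,d\eta\,d\xi ,
\]
which one verifies by collapsing the inner integral through the Cauchy formula $\frac{1}{2\pi i}\oint_{\Gamma_B}\frac{(\eta I-B)^{-1}}{\xi-\eta}\,d\eta=(\xi I-B)^{-1}$ and checking, via the resolvent identities and $\frac{1}{2\pi i}\oint_{\Gamma_A}(\xi I-A)^{-1}\,d\xi=I$, that the resulting single integral solves the equation.

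Next comes the separable approximation, which is the source of the low rank. On each piece $\Gamma_{B,j}$ centered at $\eta_j$ the geometric expansion
\[
\frac{1}{\xi-\eta}=\sum_{i\ge 0}\frac{(\eta-\eta_j)^i}{(\xi-\eta_j)^{1+i}}=\sum_{i\ge 0}g_{i,j}(\xi)\,h_{i,j}(\eta)
\]
holds whenever $|\eta-\eta_j|<|\xi-\eta_j|$. Truncating after $k_\varepsilon$ terms and substituting into the double integral makes it separate, because $g_{i,j}$ depends only on $\xi$ and $h_{i,j}$ only on $\eta$: the $\xi$-factor $\oint_{\Gamma_A}(\xi I-A)^{-1}g_{i,j}(\xi)\,d\xi$ multiplies $R$ on the left and the $\eta$-factor $\oint_{\Gamma_B}(\eta I-B)^{-1}h_{i,j}(\eta)\,d\eta$ multiplies $R$ on the right. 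This yields exactly the displayed $\tilde X$; since each of the $k_\sigma k_\varepsilon$ summands is of the form (left factor)$\,\cdot R\cdot\,$(right factor) and $R$ has rank $r_R$, the rank of $\tilde X$ is at most $r_R k_\sigma k_\varepsilon$.

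The main obstacle is the error bound $\|X-\tilde X\|_F\le\varepsilon\|X\|_F$. Here $X-\tilde X$ is the double integral of the series tail, which on $\Gamma_{B,j}$ equals $\frac{1}{\xi-\eta}\big(\frac{\eta-\eta_j}{\xi-\eta_j}\big)^{k_\varepsilon}$; if the partition is refined so that $|\eta-\eta_j|\le\tfrac12|\xi-\eta_j|$ uniformly for $\eta\in\Gamma_{B,j}$, $\xi\in\Gamma_A$, the tail decays like $2^{-k_\varepsilon}$. Bounding $1/|\xi-\eta|\le 1/\lambda$ and pulling out $\oint_{\Gamma_A}\|(\xi I-A)^{-1}\|_F\,|d\xi|=2\pi\kappa_A$ and the analogous $2\pi\kappa_B$, one gets $\|X-\tilde X\|_F\lesssim \lambda^{-1}\kappa_A\kappa_B\,2^{-k_\varepsilon}\|R\|_F$; combining with $\|R\|_F\le(\|A\|_F+\|B\|_F)\|X\|_F$ and solving for $k_\varepsilon$ produces exactly the logarithmic choice in the statement (the factor $6$ and the additive $1$ absorb the geometric-series prefactor $1/(1-\tfrac12)$ and the rounding). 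The remaining — and genuinely delicate — point is the bound on $k_\sigma$: one must count how many subarcs of controlled relative width are needed to cover $\Gamma_B$ while keeping the ratio $|\eta-\eta_j|/|\xi-\eta_j|$ below $\tfrac12$, and this count depends on the geometry of the two spectra, giving the two regimes $\mathcal O(\mu/\lambda+\log_2(2+\Lambda/\lambda))$ and $\mathcal O(\mu\log_2(2+\Lambda/\lambda))$ for the rectangular and triangular cases; I would treat these two cases separately exactly as in \cite{grasedyck2004existence}.
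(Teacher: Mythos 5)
The paper offers no proof of this lemma at all --- it is quoted verbatim as Corollary~2 of the cited Grasedyck (2004) paper and used as a black box --- so there is no internal argument for your reconstruction to diverge from. Your sketch faithfully reproduces the mechanism of the cited source (resolvent double-integral representation of the Sylvester solution, separable truncation of the Cauchy kernel $1/(\xi-\eta)$ on a partition of $\Gamma_B$, the rank count $r_R k_\sigma k_\varepsilon$ from separability, and the geometric-tail estimate that produces the logarithmic formula for $k_\varepsilon$), and the one ingredient you defer --- the case-by-case count of $k_\sigma$ for the rectangular and triangular spectral geometries --- is deferred to Grasedyck exactly as the paper itself defers the entire statement.
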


Lemma \ref{eddy-lem:1} gives the existence of the low-rank solution to the equation \eqref{eddy-eq:9}. However, due to the complexity of the problems, it is difficult to estimate the exact rank of the discretization system from the eddy current optimal problem. The bound of the rank can be obtained from Lemma \ref{eddy-lem:1} and it is present in the following theorem.

\begin{thm}\label{eddy-thm:1}
Given the eddy current optimization problems \eqref{eddy-eq:1}-\eqref{eddy-eq:2}. Let $h$ and $\tau$ be the spatial step and temporal step, respectively. By making use of the all-at-once based approach, we have the discretized system \eqref{eddy-eq:7}, which is equivalent to the matrix equation \eqref{eddy-eq:9}. Suppose that the right-hand side matrix $R$  is given in the low-rank form \eqref{eddy-eq:decomp} with $r_R$ being the rank, and the solution is approximated in the form $\tilde{X} = V_p Z (\approx X)$ up to an accuracy $\varepsilon$. Let $M$ and $K$ be the mass matrix and the stiffness matrix described previously.  Then the rank $r_X$ of the solution $\tilde{X}$ is bounded by
\[\bar{r}_X=\mathcal{O}((\log \frac{1}{\varepsilon} +\log( \frac{1}{h^2}+\frac{1}{\tau}))^2r_R).\]
\end{thm}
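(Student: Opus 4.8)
The plan is to apply Lemma~\ref{eddy-lem:1} directly to the specific matrix equation \eqref{eddy-eq:9}, so the entire argument reduces to estimating the quantity $k_\sigma k_\varepsilon$ in terms of the discretization parameters $h$ and $\tau$ and then combining with the rank $r_R$ of the right-hand side. Since the rank bound from the lemma is $r_R k_\sigma k_\varepsilon$, the whole theorem is really a matter of showing that $k_\sigma = \mathcal{O}\!\big(\log(1/h^2 + 1/\tau)\big)$ and $k_\varepsilon = \mathcal{O}\!\big(\log(1/\varepsilon) + \log(1/h^2 + 1/\tau)\big)$, whence the product gives the stated $\mathcal{O}\!\big((\log\frac{1}{\varepsilon} + \log(\frac{1}{h^2}+\frac{1}{\tau}))^2 r_R\big)$.

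First I would identify the matrices $A = M^{-1}K$ and $B$ from \eqref{eddy-eq:10} with the $A$ and $B$ of the lemma and describe their spectra. The key spectral facts I need are estimates on $\Lambda$ (the ratio of the largest to smallest relevant eigenvalue magnitudes) and $\lambda$ (a lower bound on the spectral gap) that appear inside $k_\sigma$. For the spatial operator $A = M^{-1}K$, standard finite-element theory for the stiffness-mass pencil gives eigenvalues of the generalized problem $K\phi = \lambda M\phi$ spread between $\mathcal{O}(1)$ and $\mathcal{O}(1/h^2)$, so the spectral width of $A$ scales like $1/h^2$. For the temporal/coupling block $B$, the entries scale like $\sigma/\tau$ and $1/\sqrt{\beta}$, so the relevant eigenvalue magnitudes are controlled by $1/\tau$. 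The plan is to argue that the condition-number-like ratio $\Lambda/\lambda$ entering $k_\sigma$ is therefore $\mathcal{O}(1/h^2 + 1/\tau)$, so that $\log_2(2 + \Lambda/\lambda) = \mathcal{O}\!\big(\log(1/h^2 + 1/\tau)\big)$, which supplies the $k_\sigma$ estimate in the rectangular case. I would similarly bound the quantities $\|A\|_F$, $\|B\|_F$, $\kappa_A$, $\kappa_B$ appearing inside the logarithm defining $k_\varepsilon$ by polynomial factors in $1/h^2$ and $1/\tau$; since these appear inside $\log_2(\cdots)$, they contribute only additively at the logarithmic level, yielding $k_\varepsilon = \mathcal{O}\!\big(\log\frac{1}{\varepsilon} + \log(\frac{1}{h^2}+\frac{1}{\tau})\big)$.

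With both factors estimated, the final step is purely arithmetic: multiply the two logarithmic bounds and the rank $r_R$ to obtain
\[
\bar r_X = r_R \, k_\sigma \, k_\varepsilon = \mathcal{O}\!\Big(\big(\log\tfrac{1}{\varepsilon} + \log(\tfrac{1}{h^2}+\tfrac{1}{\tau})\big)^2 r_R\Big),
\]
after absorbing the $k_\sigma$ logarithm into the square of the combined logarithm (both factors being $\mathcal{O}(\log(1/h^2+1/\tau))$ or smaller). Since $\tilde X = V_p Z$ has rank at most $p$ and approximates $X$ to accuracy $\varepsilon$, the rank $r_X$ it needs is controlled by the rank $r_{\tilde X}$ of the explicit approximant $\tilde X$ constructed in Lemma~\ref{eddy-lem:1}, which gives the desired bound.

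The main obstacle I expect is the spectral estimate feeding $k_\sigma$ and $k_\varepsilon$: one must verify that the pencil $(K,M)$ and the block matrix $B$ actually satisfy the rectangular-case (or triangular-case) spectral separation hypotheses of Lemma~\ref{eddy-lem:1}, with $\Gamma_A$, $\Gamma_B$ being genuine index-$1$ contours enclosing the respective spectra, and then to honestly track how $\Lambda$, $\lambda$, $\mu$ scale with $h$ and $\tau$. In particular, $A = M^{-1}K$ is generally nonsymmetric as a standard matrix (though symmetric in the $M$-inner product), and $B$ is nonsymmetric and involves the coupling parameter $\beta$; establishing that their eigenvalues are well separated enough to apply the lemma, and that the resulting constants depend on $h,\tau$ only through the claimed $1/h^2$ and $1/\tau$ factors, is the delicate part. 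The remaining logarithmic bookkeeping is routine once these scaling estimates are in hand.
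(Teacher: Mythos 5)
Your proposal is sound at the paper's level of rigor, but it follows a genuinely different route from the paper's own proof. You apply Lemma~\ref{eddy-lem:1} directly to the Sylvester equation \eqref{eddy-eq:9}, so in your argument the square $\bigl(\log\frac{1}{\varepsilon}+\log(\frac{1}{h^2}+\frac{1}{\tau})\bigr)^2$ arises as the \emph{product} of two separately estimated logarithmic factors, $k_\sigma=\mathcal{O}\bigl(\log(\frac{1}{h^2}+\frac{1}{\tau})\bigr)$ and $k_\varepsilon=\mathcal{O}\bigl(\log\frac{1}{\varepsilon}+\log(\frac{1}{h^2}+\frac{1}{\tau})\bigr)$. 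The paper never invokes the contour/partitioning machinery of the lemma in its proof: it vectorizes \eqref{eddy-eq:9} into the Kronecker-sum system $(I_n\otimes M^{-1}K+B\otimes I_{2m_T})\underline{X}=\underline{R}$ and approximates $\mathcal{A}^{-1}$ by the exponential (sinc) quadrature $\sum_{k=-N}^{N}c_k\exp(-t_kB)\otimes\exp(-t_kM^{-1}K)$, whose accuracy is $\mathcal{O}(\|\mathcal{A}\|_2\,e^{-\pi\sqrt{2N}})$; there the square comes from solving $e^{-\pi\sqrt{2N}}\le\varepsilon$ for $N$, and the mesh dependence enters only through $\mathrm{cond}(\mathcal{A})=\mathcal{O}(h^{-2}+\tau^{-1})$. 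Both arguments descend from Grasedyck's existence results, both rest on an unverified well-separation assumption (your $\lambda$ bounded below versus the paper's $\|\mathcal{A}^{-1}\|=\mathcal{O}(1)$), and both are equally loose about the $\beta$-dependence hidden in $B$. What your route buys is self-containedness: it uses exactly the lemma the paper quotes and stays in matrix-equation form. What it costs is precisely the ``delicate part'' you flag, plus one point you should make explicit: Lemma~\ref{eddy-lem:1} as stated takes $A$ and $B$ both $n\times n$, whereas here $A=M^{-1}K$ is $n\times n$ and $B$ is $2m_T\times 2m_T$, so you need the rectangular-solution version of Grasedyck's corollary, and the constants $\mu$, $\Lambda$, $\lambda$ and the index-1 contours must be checked for the nonsymmetric block $B$, whose spectrum is complex (eigenvalue imaginary parts of order $1/\sqrt{\beta}$). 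The paper's exponential-sum route sidesteps the contour geometry entirely, needing only norm and conditioning bounds, which is why its write-up is shorter.
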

\begin{proof}
We can rewrite the resulting matrix equation \eqref{eddy-eq:9} into a equivalent linear equation form as
\[(I_n\otimes M^{-1}K+B\otimes I_{2m_T})\underline{X}= \underline{R},\]
where the underline of a matrix denotes its vectorization. Denote by $\mathcal{A}=I_n\otimes M^{-1}K+B\otimes I_{2m_T}$, then by using the results in \cite{grasedyck2004existence,dolgov2017low}, we can approximate the inverse of $\mathcal{A}$ in the low-rank form by the exponential quadrature, i.e., for given $N$ and $k$, denote by $t_k=exp(\frac{k\pi}{\sqrt{N}})$, $c_x=\frac{t_k\pi}{\sqrt{N}}$, then
\[\mathcal{A}^{-1}=(I_n\otimes M^{-1}K+B\otimes I_{2m_T})^{-1}\approx \Sigma_{k=-N}^N c_k\exp(-t_kB)\otimes \exp(-t_k M^{-1}K),\]
where the accuracy is estimated by $\mathcal{O}(\|\mathcal{A}\|_2\cdot e^{-\pi\sqrt{2N}})$, provided that $\|\mathcal{A}^{-1}\|=\mathcal{O}(1)$.

Therefore, the rank of $\mathcal{A}^{-1}$ is estimated by $\mathcal{O}((\log\frac{1}{\varepsilon}+\log \text{cond} \mathcal{A})^2)$. Moreover, as $\text{cond}(\mathcal{A})=\mathcal{O}(h^{-2}+\tau^{-1})$, then the rank of $\tilde{X}$ is bounded by
\[\mathcal{O}((\log\frac{1}{\varepsilon}+\log(\frac{1}{h^2}+\frac{1}{\tau}))^2 r_R).\]
\end{proof}

\section{Numerical Experiments}\label{sec:5}

We now present the performance and flexibility of the splitting-based KPIK method (denoted by `` SKPIK ") on two  examples for the eddy current constrained optimization problems. All experiments were run on a desktop computer with an Intel(R) Core(TM) i7-10710U CPU @ 1.10GHz   1.61 GHz with 16 GB of RAM. The lowest order linear N\'{e}d\'{e}lec edge element, which is defined on 2D triangles \cite{nedelec1980mixed}  and 3D tetrahedra \cite{nedelec1986new},  are used to discretize the state variable, the control variable, and the adjoint variable in our experiments. The MATLAB package of \cite{anjam2015fast} is used to construct the relevant matrices. To show numerically the feasibility and effectiveness of the new method, we report the results of all the proposed methods in the sense
 of iteration step (denoted as ``IT"), elapsed CPU time in seconds (denoted as ``CPU"),  and relative residual error (denoted as ``RES").  Regarding the low-rank  approximation methods, we  report the rank (denoted as ``r") of the final solutions additionally.

We test all the methods for two examples, i.e., a 2D problem and a 3D problem. In both examples, the mass matrix $M$ is symmetric positive definite. Therefore, the coefficient matrix $\mathcal{A}$ of matrix-vector form of the linear equations \eqref{eddy-eq:5}:
\begin{equation}\label{eddy-eq:5-matrixform}
\mathcal{A}x:=
\begin{pmatrix}
\tau \mathcal{M}& 0& \mathcal{N}_{\sigma}^T\\
0&\tau\beta \mathcal{M} &-\tau \mathcal{M}\\
\mathcal{N}_{\sigma} & -\tau \mathcal{M} &0
\end{pmatrix}
\begin{pmatrix}
\underline{Y}\\
\underline{U}\\
\underline{P}
\end{pmatrix}=
\begin{pmatrix}
\tau \mathcal{M} \underline{Y}_d\\
0\\
0
\end{pmatrix}=:b
\end{equation}
 is symmetric, then we can consider  the low-rank approximate combining with the  MINRES method \cite{stoll2015low}. To implement the low-rank MINRES method efficiently, one always apply an appropriate preconditioner, e.g., the  block-diagonal  preconditioner with the Schur complement as
       \begin{equation*}
   \mathcal{P}=
    \begin{pmatrix}
    \tau \mathcal{M}&&\\
    &\tau\beta \mathcal{M}&\\
    &&\mathcal{S}
    \end{pmatrix},
    \end{equation*}
    where the Schur complement $\mathcal{S}=\frac{1}{\tau}\mathcal{N}_{\sigma}^T\mathcal{M}^{-1}\mathcal{N}_{\sigma}+\frac{\tau}{\beta}\mathcal{M}$. However, the main difficulty to use the above preconditioner is the heavy workload of solving the sub-linear system with the coefficient matrix being  the Schur complement $\mathcal{S}$. Hence, we will compare the preconditioner using the approximation of  the Schur-complement $\hat{\mathcal{S}}\approx \mathcal{S}$, which is presented by Pearson and co-authors in  \cite{pearson2012new,pearson2012regularization}, as
        \begin{equation}\label{eddy-eq:13}
   \widehat{\mathcal{P}}=
    \begin{pmatrix}
    \tau \mathcal{M}&&\\
    &\tau \beta\mathcal{M}&\\
    &&\hat{\mathcal{S}}
    \end{pmatrix},
    \end{equation}
with $\hat{\mathcal{S}}=\frac{1}{\tau}(\mathcal{N}_{\sigma}+\frac{\tau}{\sqrt{\beta}}\mathcal{M})\mathcal{M}^{-1}(\mathcal{N}_{\sigma}+\frac{\tau}{\sqrt{\beta}}\mathcal{M})^T$.
Therefore, we compare our new method with the low-rank MINRES method with the preconditioner $\widehat{\mathcal{P}}$ in \eqref{eddy-eq:13} for solving the discretizetion system \eqref{eddy-eq:5-matrixform} by all-at-once approach, namely `` LRMINRES ". Details about the LRMINRES  algorithm can be found in \cite{stoll2015low}.

We want to point out that when we need to solve the sublinear systems with the coefficient matrix being sparse and symmetric positive definite, e.g.,  $M$, $K$, and so on, we use the sparse Cholesky factorization incorporated with the symmetric approximation minimum degree reordering. To do so, we use $\text{symamd.m}$ command in the MATLAB toolbox. The same way to use the above command can be found in \cite{bai2003hermitian} and it has been proven to be very effective in solving large sparse symmetric positive definite linear equations.

During the comparison, we also used the non-all-at-once approach to discretize the eddy current optimization problem.
Then the full-rank  MINRES algorithm \cite{paige1975solution} is used to solve the resulting discretization systems step-by-step. Or equivalently, at each time step, we use the MINRES algorithm with the block-diagonal  preconditioner $P$, which is defined by
\begin{equation*}
P=
\begin{pmatrix}
    \tau M&&\\
    &\tau \beta M&\\
    &&\hat{S}
\end{pmatrix},
\end{equation*}
where   $\hat{S}=\frac{1}{\tau}(K+\frac{\tau}{\sqrt{\beta}} M)M^{-1}(K+\frac{\tau}{\sqrt{\beta}} M)$ is approximation to the Schur complement $S=\frac{1}{\tau}KM^{-1}K+\frac{\tau}{\beta}M$. The corresponding algorithm is denoted as `` FMINRES  " in our experiments.

  The tolerance for all methods is set to be $10^{-6}$. That is to say, all iteration processes are terminated when the current relative residuals satisfy
\[
\text{RES}:=\frac{\|b-\mathcal{A}x^{(k)}\|_2}{\|b\|_2}\leq 10^{-6},
\]
or the methods do not reach convergence when the  numbers of iteration steps reach the maximum number $\text{IT}_{\max}=500$, or the computing time of the corresponding method exceeds 1000 in seconds. $x^{(0)}=0$ is the initial guess and $x^{(k)}$  is the $k$th iterates of the corresponding iteration processes, respectively. If the current iterates can not reach the above  tolerance within the given maximum iteration number or the above limited time, we denote the results as `-' in the following tables.

To find the low-rank representation $X=V_pZ$ of the solution to the matrix equation \eqref{eddy-eq:9}, we perform the skinny QR factorization of both matrices, i.e., $V_p$ and $Z$. The rank of the low-rank approximation solution is produced by dropping small singular values (depending on some tolerance, denoted as ``truncation tolerance'' in our experiments) and then it leads to a low-rank approximation. The MATLAB function $\text{svds}$ is used directly to compute the singular value decomposition (SVD) of $V_pZ$.  According to \cite{stoll2015low}, we use the truncation tolerance $10^{-10}$ throughout our numerical experiments. More details about the way to compute the truncated SVD can be found in \cite{stoll2015low}.

It is worth mentioning that the iteration counts of the SKPIK method we report in all the tables refer to the number of iteration counts of the extended Krylov subspace method, i.e., the KPIK algorithm \cite{simoncini2007new}. While the number of iteration counts of the LRMINRES method refers to the number of iteration counts of the MINRES algorithm.  The number of the iteration counts of the FMINRES method refers to the average of the numbers of iteration counts, i.e., the sum of the required iterations on all-time steps divided by the number of time steps $m_T$. Besides, when we solve the systems regarding the coefficient matrices being the Schur-complement approximation  in the LRMINRES method, we employ the inexact KPIK with a fixed number of steps, i.e.,  6 steps according to \cite{stoll2015low}.

\begin{example}\label{eddy-ex:1}
{	\rm We consider $\Omega=[0,1]^2$ and  $\nu=1$. We split the domain into two parts across the diagonal with $\Omega_1=\{x\in \Omega | x_1>x_2\}$ and $\Omega_2 = \Omega\setminus \Omega_1$. The desired state is given by
\[
y_d(x,t)|_{\Omega_1}=
\begin{pmatrix}
\sin(2\pi x_1)+2\pi\cos(2\pi x_1)(x_1-x_2)\\
\sin((x_1-x_2)^2(x_1-1)^2x_2-\sin(2\pi x_1))
\end{pmatrix} \quad \text{and} \quad y_d(x,t)|_{\Omega_2}=0.
\]
}
\end{example}

 In this example, our experiments are performed for the final time $T=1$ by varying the numbers of time steps as $m_T=100, 200, 400,  800, 1600$  and $3200$. We first test all the methods regarding  the numbers of spatial discretization nodes being $n=3136$, as well as a large range of problem parameters concerning $\beta=10^{-2}$, $10^{-4}$, $10^{-6}$ and $10^{-8}$, $\sigma=10^{-5}$, $10^{-3}$, $10^{-1}$, $10^1$, $10^3$ and $10^5$.

 Firstly, we test a small number of time steps and a small size of space discretization, i.e., $m_T=100$ and $n=3136$. The results are listed in Figure \ref{eddy-fig:1}, concerning different choices of the parameters $\beta$ and $\sigma$, where on the left is the SKPIK method, in the middle is the LRMINRES method, and on the right is the FMINRES method.

   \begin{figure}[htb]
	\includegraphics[width=0.3\textwidth]{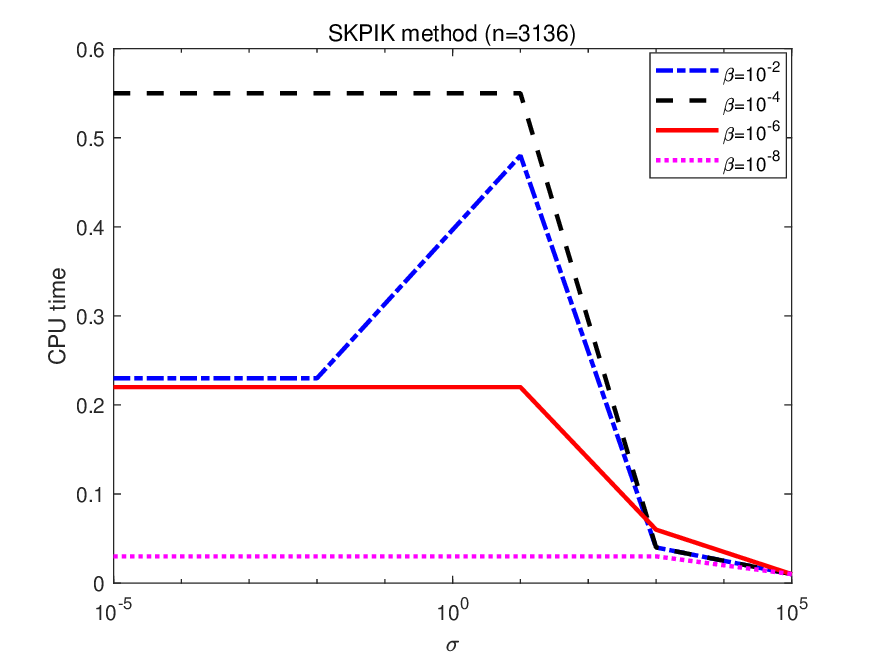}
\includegraphics[width=0.3\textwidth]{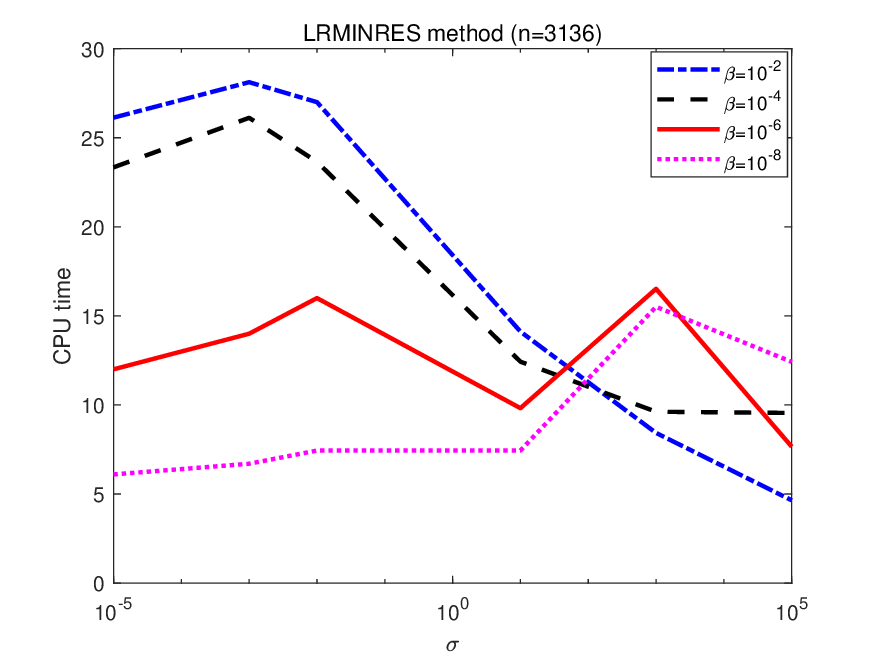}
\includegraphics[width=0.3\textwidth]{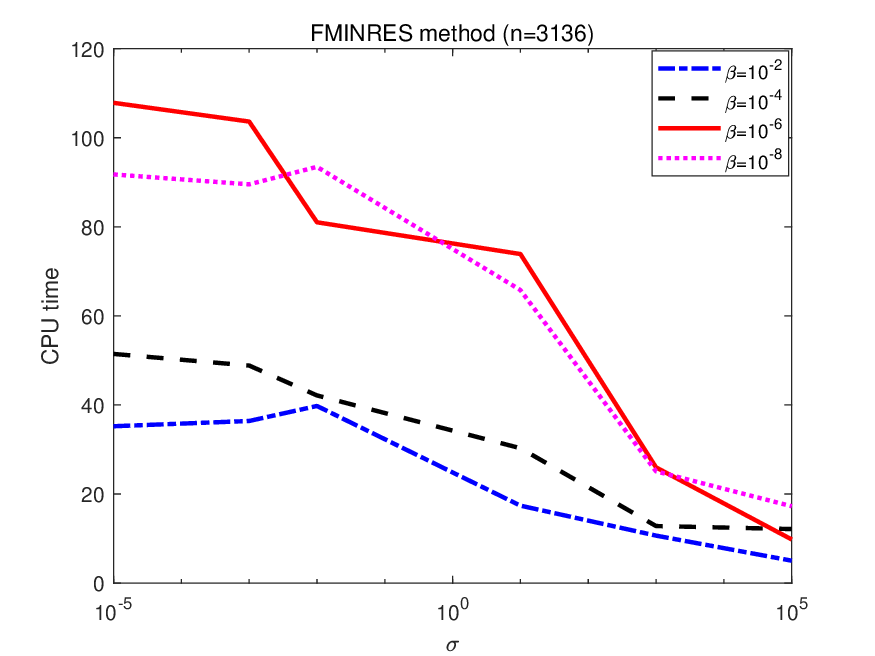}
	\centering\caption{The computing time against different values of $\sigma$ and $\beta$ for all the proposed methods ($m_T=100$).} \label{eddy-fig:1}
\end{figure}

 We further test all the methods by varying the numbers of time steps as $m_T=$100, 200, 400, 800, 1600, 3200 for the  size of space discretization  $n=3136$. Then we plot the computing time against the number of time steps in Fig. \ref{eddy-fig:2} by fixing $\sigma =10$, where again on the left is the SKPIK method, in the middle is the LRMINRES method, and on the right is the FMINRES method.

 \begin{figure}[htb]
	\centering
	\includegraphics[width=0.3\textwidth]{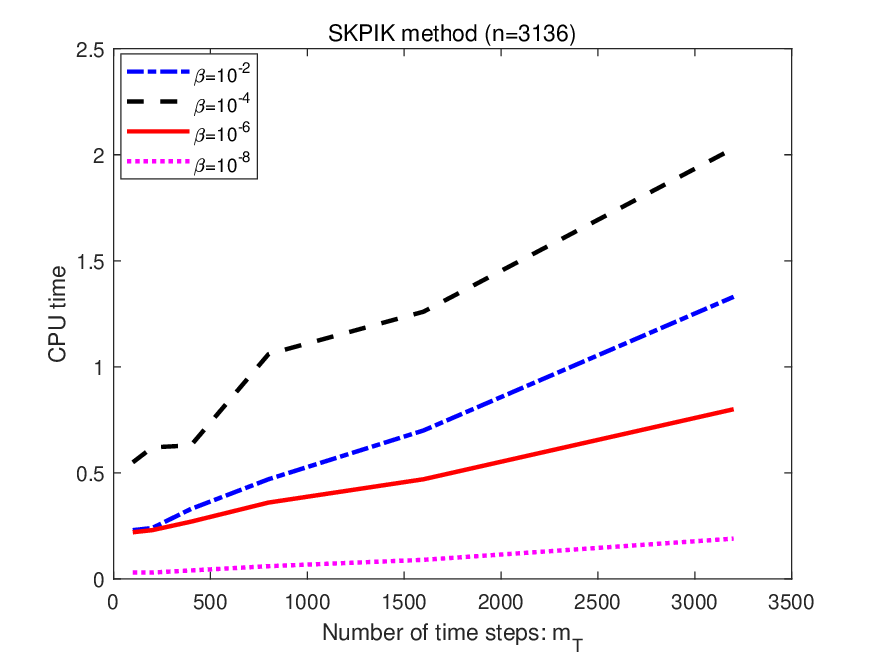}
\includegraphics[width=0.3\textwidth]{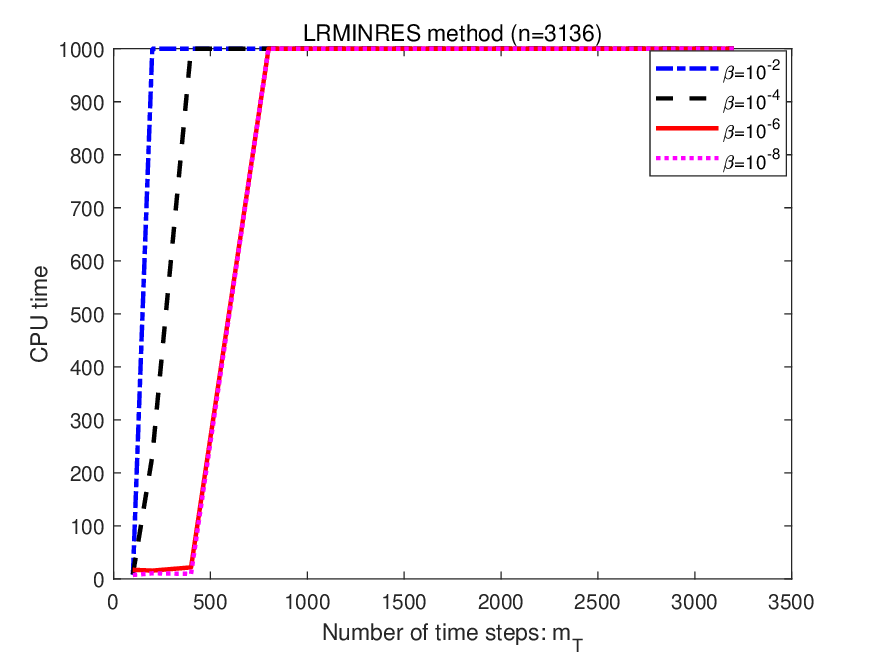}
\includegraphics[width=0.3\textwidth]{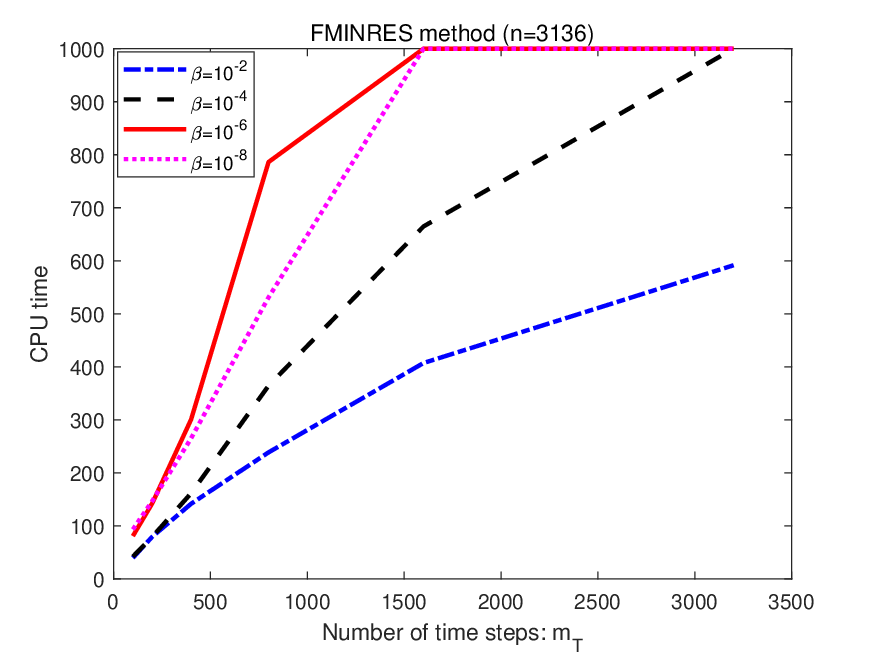}
	\centering\caption{The computing time against different time steps $m_T$ for all the proposed methods.} \label{eddy-fig:2}
\end{figure}

 From Figures \ref{eddy-fig:1}-\ref{eddy-fig:2}, we see the  FMINRES  method can solve the optimization problem successfully for small dimensional problems because of the efficiency of the diagonal-block preconditioning technique and the MINRES algorithm.  However, we also see that the FMINRES  method has always consumed lots of time to obtain the solution, because the  FMINRES  method needs to obtain the solution at each time step before solving the system of the next time step, and also needs to prepare for the coefficient matrices and the right-hand side vectors for the next time step. Besides, from Fig. \ref{eddy-fig:2},  we also find that the computing time needed by the  FMINRES  method increases rapidly as the number of time steps increases.

 Meanwhile, the  LRMINRES  method seems to be more efficient than the  FMINRES  method for the small number of time steps and the small size of space discretization in Figure \ref{eddy-fig:1}. Particularly, when $m_T=100$ and 200, and the parameters $\beta$ and $\sigma$ are small enough, the  LRMINRES  method performs very well. But when $m_T=400$, 800, 1600, and 3200, the iteration numbers to get the solution needed by the LRMINRES method vary greatly corresponding to different problem parameters $\beta$ and $\sigma$. These results are consistent with the results obtained by Stoll and co-authors \cite{stoll2015low}.

 From all the results plotted in these figures, we see the SKPIK method always performs the best. It needs the least computing time. Besides, as the number of time steps increases, the computing time of the SKPIK method increases very gently. To illustrate this, we further plot in Fig. \ref{eddy-fig:3} the computing time of the SKPIK method against the number of time steps for the number of spatial discretization nodes being $n=12416$.

  \begin{figure}[htb]
	\centering
	\includegraphics[width=0.45\textwidth]{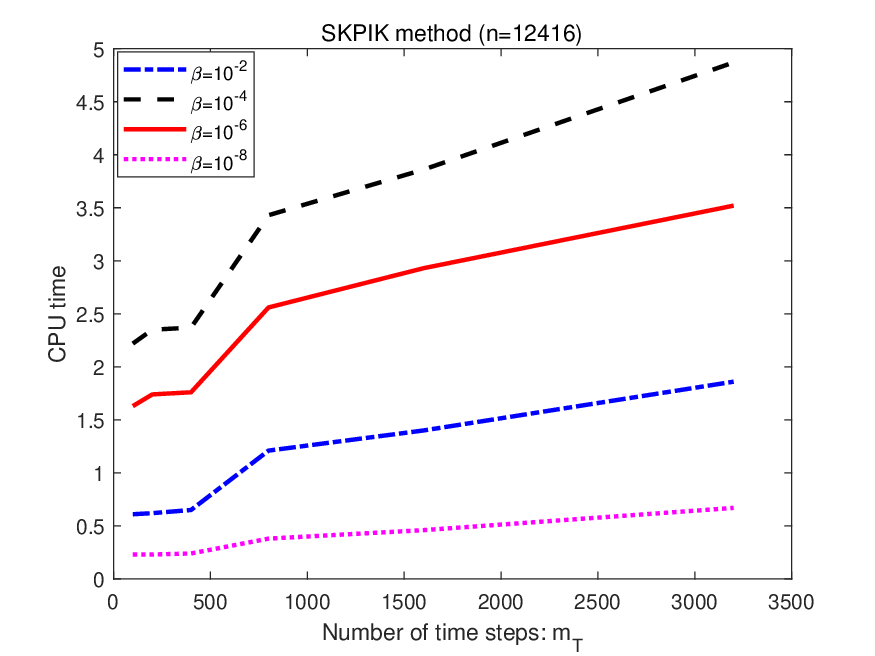}
\includegraphics[width=0.45\textwidth]{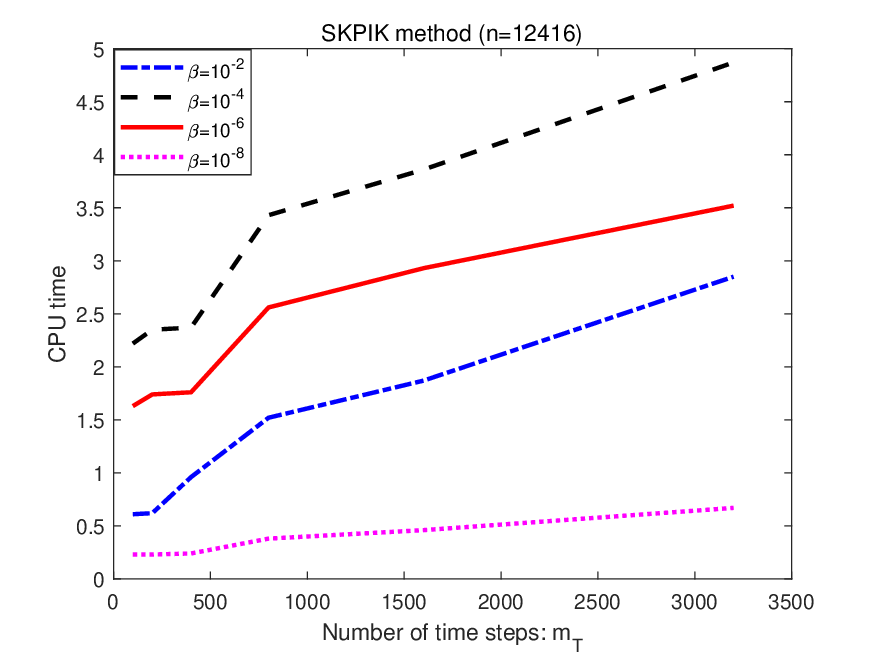}
	\centering\caption{The computing time of the SKPIK method against different time steps $m_T$ ($\sigma=10^{-1}$(left), $\sigma=10^1$(right)).} \label{eddy-fig:3}
\end{figure}

Therefore,  as seen in Figures \ref{eddy-fig:1}-\ref{eddy-fig:3}, increasing the discretization size barely impacts the resulting subspace sizes.  The computing time needed by the  FMINRES   method increases rapidly as the number of time steps increases. Compared with the  FMINRES  method, the  LRMINRES  method seems to be less impacted concerning the number of time steps. However, the  LRMINRES  method depends greatly on the choices of the parameters $\beta$, $\sigma$ as well as the number of time steps.

 From all the above results, we can see that the SKPIK  method  shows great robustness concerning the discretization sizes $n$ and $m_T$ as well as the control parameter $\beta$ and the conductivity $\sigma$. Additionally, the time needed by the SKPIK method to solve the optimization problems increases considerably slowly.

To show more details about the computing results of the SKPIK method, at the end of this example, we  test  higher dimensional problems, i.e., $n=49408$, $n=197120$ and $m_T=800, 1600, 3200$, which roughly resembles up to a total of 78 million degrees of freedom.  The numerical results are listed in Table \ref{table:ex1-17} and Table \ref{table:ex1-18}, respectively.

\begin{table}
	\centering\caption{Numerical results of the  SKPIK  method for Example~\ref{eddy-ex:1} (2D).} \label{table:ex1-17}
	\begin{tabular}{l |l l l l |l l l l |l l l l}
		\toprule
		$n=49408$   & \multicolumn{4}{c}{$m_T$=800} &  \multicolumn{4}{c}{$m_T$=1600} &  \multicolumn{4}{c}{$m_T$=3200} \\
		$\sigma, \beta$  & r & IT & CPU & RES & r & IT & CPU & RES & r & IT & CPU & RES \\
		\midrule
$\sigma=10^{-4}$ \\
$10^{-2}$  & 6 &  49 & 2.68& 8.67e-7	& 6 & 51 & 2.92& 1.63e-7& 6 & 51 & 3.93& 1.63e-7  \\
$10^{-4}$  &6 &  93 & 7.66& 5.46e-7 &6 &  93 & 10.09& 5.46e-7 & 6 &  93 & 14.57& 5.46e-7  \\
$10^{-6}$  & 4 &  113 & 13.36& 9.59e-7 & 4 &  113 & 20.78& 9.59e-7 & 4 &  113 & 21.46&9.59e-7  \\
$10^{-8}$  & 4 &  51 & 2.71& 6.91e-7 & 4 &  51 & 2.92& 6.91e-7 & 4 &  51 & 3.93& 6.91e-7  \\
$\sigma=1$ \\
$10^{-2}$  & 6 &  87 & 7.16& 8.29e-7& 6 & 93 & 11.21& 5.08e-7& 6 & 93 & 14.59& 6.96e-7  \\
$10^{-4}$  &6 &  93 & 7.66& 6.87e-7 &6 &  93 & 11.26& 8.41e-7 & 6 &  93 & 14.47& 9.94e-7  \\
$10^{-6}$  & 6 &  113 & 13.39& 9.56e-7 & 6 &  113 & 20.06& 9.56e-7 & 6 &  113 & 21.47& 9.56e-7  \\
$10^{-8}$  & 6 &  51 & 2.71& 6.91e-7 & 6 &  51 & 2.92& 6.91e-7 & 6 &  51 & 3.67& 6.91e-7  \\
$\sigma=10^{4}$ \\
$10^{-2}$  & 6 &  21 & 0.63& 8.57e-7& 6 & 22 & 0.87& 9.23e-7& 6 & 23 & 1.10 & 7.29e-7  \\
$10^{-4}$  &6 &  22 & 0.64& 5.99e-7 &6 &  23 & 1.00& 6.10e-7 & 6 &  24 & 1.23& 5.38e-7  \\
$10^{-6}$  & 6 &  22 & 0.64& 9.32e-7 & 6 &  23 & 1.01& 9.92e-7 & 6 &  24 & 1.22& 8.40e-7  \\
$10^{-8}$  & 6 &  24 & 0.75& 4.93e-7 & 6 &  24 & 1.09& 8.10e-7 & 6 &   25 & 1.70& 6.54e-7  \\
\bottomrule
	\end{tabular}
\end{table}

\begin{table}
	\centering\caption{Numerical results of the  SKPIK  method for Example~\ref{eddy-ex:1} (2D).} \label{table:ex1-18}
	\begin{tabular}{l |l l l l |l l l l |l l l l}
		\toprule
		$n=197120$   & \multicolumn{4}{c}{$m_T$=800} &  \multicolumn{4}{c}{$m_T$=1600} &  \multicolumn{4}{c}{$m_T$=3200} \\
		$\sigma, \beta$ & r & IT & CPU & RES & r & IT & CPU & RES & r & IT & CPU & RES \\
		\midrule
$\sigma=10^{-4}$ \\
$10^{-2}$  & 6 &  51 & 15.53& 3.12e-7	& 6 & 51 & 17.47& 3.13e-7& 6 & 51 & 22.03& 3.13e-7  \\
$10^{-4}$  &6 &  99 & 66.35& 6.78e-7 &6 &  99 & 68.28& 9.39e-7 & 6 &  99 & 103.79& 9.72e-7  \\
$10^{-6}$  & 4 &  161 & 163.50& 6.29e-7 & 4 &  161 & 170.83& 6.29e-7 & 4 &  161 & 203.03&6.29e-7  \\
$10^{-8}$  & 4 &  99 & 66.01& 7.03e-7 & 4 &  99 & 68.90& 7.03e-7 & 4 &  99 & 76.90& 7.03e-7  \\
$\sigma=1$ \\
$10^{-2}$  & 6 &  111 & 75.39& 5.94e-7& 6 & 117 & 86.98& 5.49e-7& 6 & 117 & 96.46& 9.25e-7  \\
$10^{-4}$  &6 &  111 & 75.41& 6.78e-7 &6 &  117 & 86.42& 5.91e-7 & 6 &  117 & 96.79& 9.72e-7  \\
$10^{-6}$  & 6 &  161 & 163.88& 6.27e-7 & 6 &  161 &170.03& 6.28e-7 & 6 &  161 & 204.79& 6.28e-7  \\
$10^{-8}$  & 6 &  99 & 66.75& 7.03e-7 & 6 &  99 & 69.90& 7.03e-7 & 6 &  99 & 72.39& 7.02e-7  \\
$\sigma=10^{4}$ \\
$10^{-2}$  & 6 &  34 & 9.65& 6.97e-7& 6 & 36 & 10.34& 7.72e-7& 6 & 40 & 12.97 & 8.73e-7  \\
$10^{-4}$  &6 &  34 & 9.01& 7.72e-7 &6 &  36 & 12.16& 9.69e-7 & 6 & 41 & 14.81& 7.45e-7  \\
$10^{-6}$  & 6 &  38 &11.56& 7.21e-7 & 6 &  38 & 12.99& 8.72e-7 & 6 &  41 & 14.73& 9.77e-7  \\
$10^{-8}$  & 6 &  47 & 14.62& 8.23e-7 & 6 &  47 & 17.64& 8.23e-7 & 6 &  47 & 18.83& 8.26e-7  \\
\bottomrule
	\end{tabular}
\end{table}

The results in Tables \ref{table:ex1-17}-\ref{table:ex1-18} once again illustrate that the SKPIK method is very efficient and shows great robustness concerning different problem sizes and problem parameters.

From the numerical results in Example \ref{eddy-ex:1}, we see that the SKPIK method outperforms the other methods especially when the number of time steps is large. Hence, in the following example, we further test all the methods by a 3D problem to show the robustness of the SKPIK  method.

\begin{example}\label{eddy-ex:2}
{	\rm We consider $\Omega=[0,1]^3$ and $\nu=1$. The desired state is given by
\[
y_d(x,t)=
\begin{pmatrix}
0\\
0\\
\sin(\pi x_1)\sin(\pi x_2)\sin(\pi x_3)
\end{pmatrix}.
\]
}
\end{example}

 In this example, we test all the methods by varying the number of time steps as $m_T=800, 1600, 3200$,  the cost parameters $\beta=10^{-2}, 10^{-4}, 10^{-6}, 10^{-8}$ and the conductivity $\sigma=10^{-4}, 1, 10^4$. The number of spatial discretization nodes varies from $n=1854$, $n=13428$ to $n=102024$. The numerical results corresponding to all the methods are listed in Table \ref{table:ex2-19} ($m_T=800$), Table \ref{table:ex2-20} ($m_T=1600$) and Table \ref{table:ex2-21} ($m_T=3200$).

 The numerical results in Tables \ref{table:ex2-19}-\ref{table:ex2-21} show much analogous computational phenomenon to our previous observations. Therefore, we can conclude that our new method is very robust and  effective for solving large-scale systems from an all-at-once approach to the discretized eddy current optimization problem.

\begin{table}
	\centering\caption{$m_T=800$ for Example~\ref{eddy-ex:2} (3D). \label{table:ex2-19}}
	\begin{tabular}{l l |l l l l  |l l l l |l l l l }
		\toprule
		$n$ &  & \multicolumn{4}{c}{n=1854} &  \multicolumn{4}{c}{n=13428} &  \multicolumn{4}{c}{n=102024} \\
		$\sigma, \beta$ & method & r & IT & CPU & RES & r & IT & CPU & RES & r & IT & CPU & RES \\
		\midrule
$\sigma=10^{-4}$&\\
$10^{-2}$ &  SKPIK  &6  &  42 & 0.35& 5.12e-7& 6 & 68 & 2.06& 7.66e-7& 6 & 96 & 52.37& 9.89e-7  \\
&  LRMINRES  & - & - & - & - & -&  - & - & -& - & - & - & -  \\
&  FMINRES  & &  23 & 352.17 & 3.18e-7 & &  - & - & -&  & - & - & -  \\
$10^{-4}$ &  SKPIK  &6 &  30 & 0.17& 6.17e-7 & 6 &  58 & 1.49& 8.48e-7 & 6&  107 & 55.81& 8.30e-7  \\
&  LRMINRES  &6 &  175 & 41.15 & 9.32e-7 &- &  - & - & -& - & - & - & -  \\
&  FMINRES  & & 22 & 340.65 & 9.51e-7 & &  - & - & -&  & - & - & -  \\
$10^{-6}$ &  SKPIK  &4 &  10 & 0.04& 4.13e-7 & 4 &  21 & 0.31& 6.93e-7 & 4 &  43 & 11.80& 8.30e-7  \\
&  LRMINRES  & 6 & 133 & 44.95 & 9.72e-7 & 6&  271 & 1614 & 9.70e-7& - & - & - & -  \\
&  FMINRES  & &  23 & 669.99 & 3.49e-7 & &  - & - & -&  & - & - & -  \\
$10^{-8}$ &  SKPIK  &3 &  4 & 0.02& 4.44e-7 & 4 &  7 & 0.12& 1.73e-7 & 4 &  13 & 2.55& 5.99e-7  \\
&  LRMINRES  & 6 & 27 & 7.81 & 6.66e-7 & 6&  49 & 326.95 & 6.30e-7& 6 & 59 & 22639 & 9.65e-7  \\
&  FMINRES  & &  15 & 461.66 & 9.87e-8 & &  - & - & -&  & - & - & -  \\
\\
$\sigma=1$\\
$10^{-2}$ &  SKPIK  & 6 &  42 & 0.35& 5.10e-7& 6 & 68 & 2.06& 9.18e-7& 6 & 104 & 53.88& 9.92e-7 \\
&  LRMINRES  & - & - & - & - & -&  - & - & -& - & - & - & -  \\
&  FMINRES  & &  7 & 118.37 & 2.27e-7 & &  - & - & -&  & - & - & -  \\
$10^{-4}$ &  SKPIK  &6 &  30 & 0.17& 5.98e-7 & 6&  58 & 1.42& 8.24e-7 & 6&  107 & 55.64& 8.09e-7   \\
&  LRMINRES  & - & - & - & - & -&  - & - & -& - & - & - & -  \\
&  FMINRES  & &  12.59 & 203.14 & 1.32e-7 & &  - & - & -&  & - & - & -  \\
$10^{-6}$ &  SKPIK  & 4&  10 & 0.04& 4.12e-7&6 & 21 & 0.31 & 6.91e-7 & 6 &  43 & 11.98 & 8.27e-7  \\
&  LRMINRES  & 6 & 243 & 100.67 & 9.87e-7 & 6&  335 & 2416 & 9.83e-7& - & - & - & -  \\
&  FMINRES  & &  9 & 279.38 & 4.28e-7 & &  - & - & -&  & - & - & -  \\
$10^{-8}$ &  SKPIK  &4 &  4 & 0.02& 4.43e-7 & 6 &  7 & 0.11& 1.73e-7 & 6 &  13 & 3.55 & 5.98e-7  \\
&  LRMINRES  & 6 & 29 & 8.41 & 5.85e-7 & 6&  63 & 469.00 & 9.38e-7& 6 & 67 & 27664 & 8.61e-7  \\
&  FMINRES  & &  11 & 358.81 & 1.28e-7 & &  - & - & -&  & - & - & -  \\
\\
$\sigma=10^{4}$\\
$10^{-2}$ &  SKPIK  & 4 &  4 & 0.02& 2.38e-8& 5 & 5 & 0.08& 9.51e-8& 6 & 7 & 2.08& 5.95e-7  \\
&  LRMINRES  & - & - & - & - & -&  - & - & -& - & - & - & -  \\
&  FMINRES  & &  2 & 83.79 & 2.25e-11 & &  - & - & -&  & - & - & -  \\
$10^{-4}$ &  SKPIK  & 4&  4 & 0.02& 2.79e-8 & 5 &  5 & 0.08 & 2.46e-7 & 6 &  7 & 2.06& 8.52e-7  \\
&  LRMINRES  & - & - & - & - & -&  - & - & -& - & - & - & -  \\
&  FMINRES  & &  2 & 88.11 & 2.17e-8 & &  - & - & -&  & - & - & -  \\
$10^{-6}$ &  SKPIK  & 4&  4 & 0.02& 9.13e-8 &5 &  5 & 0.08 & 4.28e-7 &6  &  8 & 2.11 & 1.68e-7  \\
&  LRMINRES  & - & - & - & - & -&  - & - & -& - & - & - & -  \\
&  FMINRES  & &  2 & 87.57 &  5.86e-8 & &  - & - & -&  & - & - & -  \\
$10^{-8}$ &  SKPIK  & 4& 4 & 0.02& 1.10e-7 & 5&  5 & 0.08& 8.60e-7 & 6 &  8 & 2.10 & 3.61e-7  \\
&  LRMINRES  & - & - & - & - & -&  - & - & -& - & - & - & -  \\
&  FMINRES  & &  3 & 115.24 & 3.77e-8 & &  - & - & -&  & - & - & -  \\
\bottomrule
	\end{tabular}
\end{table}

\begin{table}
	\centering\caption{$m_T=1600$ for Example~\ref{eddy-ex:2} (3D). \label{table:ex2-20}}
	\begin{tabular}{l l |l l l l |l l l l |l l l l }
		\toprule
		$n$ &  & \multicolumn{4}{c}{n=1854} &  \multicolumn{4}{c}{n=13428} &  \multicolumn{4}{c}{n=102024} \\
		$\sigma, \beta$ & method & r & IT & CPU & RES & r & IT & CPU & RES & r & IT & CPU & RES \\
		\midrule
$\sigma=10^{-4}$\\
$10^{-2}$ &  SKPIK  & 6 &  42 & 0.45& 5.12e-7& 6 & 68 & 2.25& 7.66e-7& 6 & 96 & 54.99& 9.90e-7  \\
&  LRMINRES  & - & - & - & - & -&  - & - & -& - & - & - & -  \\
&  FMINRES  & &  23 & 773.37 & 3.18e-8 & &  - & - & -&  & - & - & -  \\
$10^{-4}$ &  SKPIK  &6 &  30 & 0.24& 6.17e-7 & 6 &  58 & 1.68& 8.48e-7 & 6&  107 & 58.82& 8.03e-7  \\
&  LRMINRES  & - & - & - & - & -&  - & - & -& - & - & - & -  \\
&  FMINRES  & &  23 & 792.08 & 1.81e-7 & &  - &- & -&  & - & - & -  \\
$10^{-6}$ &  SKPIK  & 4&  10 & 0.07& 4.13e-7 & 4&  21 & 0.40& 6.93e-7 & 4 &  43 & 11.17& 8.30e-7  \\
&  LRMINRES  & - & - & - & - & -&  - & - & -& - & - & - & -  \\
&  FMINRES  & &  23 & - & 3.46e-7 & &  - & - & -&  & - & - & -  \\
$10^{-8}$ &  SKPIK  &3 & 4 & 0.03& 4.44e-7 & 4 &  7 & 0.12& 1.73e-7 & 4 &  13 & 3.88 & 5.99e-7  \\
&  LRMINRES  & 6 & 105 & 67.74 & 9.55e-7 & 6&  89 & 600.37 & 8.88e-7& - & - & - & -  \\
&  FMINRES  & &  15 & - & 9.80e-8 & &  - & - & -&  & - & - & -  \\
\\
$\sigma=1$\\
$10^{-2}$ &  SKPIK  & 6 &  45 & 0.50& 8.14e-7&  6& 68 & 2.25& 9.27e-7& 6 & 105 & 56.35 & 9.39e-7  \\
&  LRMINRES & - & - & - & - & -&  - & - & -& - & - & - & -  \\
&  FMINRES  & &  7 & 266.45 & 7.22e-9 & &  - & - & -&  & - & - & -  \\
$10^{-4}$ &  SKPIK  & 6&  30 &0.24& 5.99e-7 & 6&  58 & 1.66& 8.24e-7 & 6 &  107 & 58.52& 8.09e-7  \\
&  LRMINRES  & - & - & - & - & -&  - & - & -& - & - & - & -  \\
&  FMINRES  & &  10.10 & 374.82 & 1.71e-7 & &  - & - & -&  & - & - & -  \\
$10^{-6}$ &  SKPIK  &6 &  10 & 0.07& 4.12e-7 & 6 &  21 & 0.40& 6.91e-7 & 6 &  43 & 11.17& 8.28e-7  \\
&  LRMINRES  & - & - & - & - & -&  - & - & -& - & - & - & -  \\
&  FMINRES  & &  11 & 698.08 & 1.98e-7 & &  - & - & -&  & - & - & -  \\
$10^{-8}$ &  SKPIK  &4 &  4 & 0.03& 4.43e-7 & 6 & 7 & 0.12 & 1.73e-7 & 6 &  13 & 3.88 & 5.98e-7  \\
&  LRMINRES  & 6 & 492 & 243.56 & 9.83e-7 & -&  - & - & -& - & - & - & -  \\
&  FMINRES  & &  10 &  705.80 & 7.47e-7 & &  - & - & -&  & - & - & -  \\
\\
$\sigma=10^{4}$\\
$10^{-2}$ &  SKPIK  & 4 &  4 & 0.04& 3.34e-8& 5 & 5 & 0.09& 1.32e-7& 6 & 7 & 1.99 & 8.62e-7  \\
&  LRMINRES  & - & - & - & - & -&  - & - & -& - & - & - & -  \\
&  FMINRES  & &  2 & 170.62 & 5.51e-8 & &  - & - & -&  & - & - & -  \\
$10^{-4}$ &  SKPIK  &4 &  4 & 0.04& 3.93e-8 & 5&  5 & 0.09& 3.49e-7 & 6 &  8 & 2.53& 1.36e-7  \\
&  LRMINRES  & - & - & - & - & -&  - & - & -& - & - & - & -  \\
&  FMINRES  & &  2 & 174.64 & 2.00e-8 & &  - & - & -&  & - & - & -  \\
$10^{-6}$ &  SKPIK  &4 &  4 & 0.03& 1.31e-7 &5 &  5 & 0.09& 6.12e-7 & 6 &  8 & 2.53 & 2.45e-7  \\
&  LRMINRES  & - & - & - & - & -&  - & - & -& - & - & - & -  \\
&  FMINRES  & &  2 & 176.18 & 2.61e-8 & &  - & - & -&  & - & - & -  \\
$10^{-8}$ &  SKPIK  &4 &  4 & 0.03& 1.57e-7 & 6 &  6 &0.11& 3.07e-8 & 6 &  8  & 2.53 & 5.21e-7  \\
&  LRMINRES  & - & - & - & - & -&  - & - & -& - & - & - & -  \\
&  FMINRES  & &  2 & 196.44 & 4.44e-7 & &  - & - & -&  & - & - & -  \\
\bottomrule
	\end{tabular}
\end{table}

\begin{table}
	\centering\caption{$m_T=3200$ for Example~\ref{eddy-ex:2} (3D). \label{table:ex2-21}}
	\begin{tabular}{l l |l l l l |l l l l |l l l l }
		\toprule
		$n$ &  & \multicolumn{4}{c}{n=1854} &  \multicolumn{4}{c}{n=13428} &  \multicolumn{4}{c}{n=102024} \\
		$\sigma, \beta$ & method & r & IT & CPU & RES & r & IT & CPU & RES & r & IT & CPU & RES \\
		\midrule
$\sigma=10^{-4}$ \\
$10^{-2}$ &  SKPIK  &6  &  42 & 0.83& 5.12e-7& 6  & 68 & 2.94& 7.66e-7&6  & 97 & 57.26& 9.34e-7 \\
&  LRMINRES  & - & - & - & - & -&  - & - & -& - & - & - & -  \\
&  FMINRES  & &  - & - & - & &  - & - & -&  & - & - & -  \\
$10^{-4}$ &  SKPIK  & 6&  30 & 0.46& 6.17e-7 & 6 &  58 & 2.63 & 8.48e-7 & 6 &  107 & 63.37& 8.30e-7  \\
&  LRMINRES  & - & - & - & - & -&  - & - & -& - & - & - & -  \\
&  FMINRES  & &  - & - & - & &  - & - & -&  & - & - & -  \\
$10^{-6}$ &  SKPIK  &4 &  10 & 0.13& 4.13e-7 & 4 &  21 & 0.54& 4.95e-7 &4 &  43 & 14.37& 8.30e-7 \\
&  LRMINRES  & - & - & - & - & -&  - & - & -& - & - & - & -  \\
&  FMINRES  & &  - & - & - & &  - & - & -&  & - & - & -  \\
$10^{-8}$ &  SKPIK  & 3 &  4 & 0.05 & 4.44e-7 & 4& 7 & 0.16& 1.73e-7 & 4&  13 & 4.13& 5.99e-7  \\
&  LRMINRES  & 6 & 69 & 36.95 & 8.79e-7 & 6&  127 & 852.71 & 9.52e-7& - & - & - & -  \\
&  FMINRES  & &  - & - & - & &  - & - & -&  & - & - & -  \\
\\
$\sigma=1$ \\
$10^{-2}$ &  SKPIK  & 6 &  56 & 1.24& 8.44e-7& 6 & 68 &  2.94& 9.32e-7& 6 & 106 & 62.52& 6.25e-7  \\
&  LRMINRES  & - & - & - & - & -&  - & - & -& - & - & - & -  \\
&  FMINRES  & &  5 & 407.86 & 4.90e-7 & &  - & - & -&  & - & - & -  \\
$10^{-4}$ &  SKPIK  & 6&  31 & 0.48& 6.82e-7 & 6 &  58 & 2.63 &  8.24e-7 & 6&  107 & 63.35 & 8.10e-7  \\
&  LRMINRES  & - & - & - & - & -&  - & - & -& - & - & - & -  \\
&  FMINRES  & &  7.84 & 625.07 & 2.45e-7 & &  - & - & -&  & - & - & -  \\
$10^{-6}$ &  SKPIK  &6 &  10 & 0.13& 4.17e-7 & 6&  21 & 0.54& 6.91e-7 &6 &  43 & 18.43 & 8.28e-7  \\
&  LRMINRES & - & - & - & - & -&  - & - & -& - & - & - & -  \\
&  FMINRES  & &  - & - & - & &  - & - & -&  & - & - & -  \\
$10^{-8}$ &  SKPIK  & 5 &  4 & 0.05 & 4.43e-7 & 6&  7 & 0.16& 1.73e-7 & 6 &  13 & 4.13 & 5.98e-7  \\
&  LRMINRES  & 6 & 75 & 42.05 & 9.49e-7 & 6&  177 & 1205.91 & 9.40e-7& - & - & - & -  \\
&  FMINRES  & &  - & - & - & &  - & - & -&  & - & - & -  \\
\\
$\sigma=10^{4}$ \\
$10^{-2}$ &  SKPIK  & 4 &  4 &0.05& 4.72e-8& 5 & 5 & 0.13& 1.86e-7& 6 & 8 & 2.47 & 7.24e-8  \\
&  LRMINRES  & - & - & - & - & -&  - & - & -& - & - & - & -  \\
&  FMINRES  & &  2 & 370.89 & 1.36e-12 & &  - & - & -&  & - & - & -  \\
$10^{-4}$ &  SKPIK  &4 &  4 & 0.05& 5.55e-8 & 5&  5 & 0.13& 4.95e-7 & 6 &  8  & 2.47 & 1.96e-7  \\
&  LRMINRES  & - & - & - & - & -&  - & - & -& - & - & - & -  \\
&  FMINRES  & &  2 & 350.95 & 1.54e-9 & &  - & - & -&  & - & - & -  \\
$10^{-6}$ &  SKPIK  & 4 &  4 & 0.05 & 1.84e-7 & 5&  5 & 0.13& 8.71e-7 & 6 &  8 & 2.47 & 3.52e-7  \\
&  LRMINRES  & - & - & - & - & -&  - & - & -& - & - & - & -  \\
&  FMINRES  & &  2 & 374.63 & 2.15e-8 & &  - & - & -&  & - & - & -  \\
$10^{-8}$ &  SKPIK  & 4 &  4 & 0.05 & 2.22e-7 &6 &  6 & 0.15 & 4.37e-8 & 6 &  8 & 2.47 & 7.45e-7  \\
&  LRMINRES  & - & - & - & - & -&  - & - & -& - & - & - & -  \\
&  FMINRES  & &  2 & 362.29 & 1.70e-7 & &  - & - & -&  & - & - & -  \\
\bottomrule
	\end{tabular}
\end{table}

\section{Conclusion and Outlook}\label{sec:6}

We construct a splitting-based KPIK method to solve  the discretization from  the eddy current optimal control problems in an all-at-once-based approach.  The SKPIK method relies on the reformulation of a special splitting of the coefficient matrix from the order-reduced KKT system into a matrix equation and the KPIK algorithm for solving the matrix equation. Some properties of the low-rank solution are proposed. Besides, the SKPIK method has been illustrated to be robust concerning different spatial and temporal discretizations and parameters by experiments.

However, the LRMINRES method in our experiments performs poorly because of the bad Schur-complement approximation in the preconditioner. Hence,  improving the Schur-complement approximation would be one of the future work. Another future work will focus on the low-rank approximation method for solving all-at-once discretized eddy current optimal control problems with much more complicated boundary conditions.

\section*{Acknowledgments}
This work was supported by the National Natural Science Foundation of China (Nos. 12126344, 12126337 and 11901324) and the China Scholarship Council (No. 202308350044). 


\section*{Conflict of Interests}

The authors declare that there is no conflict of interests regarding the
publication of this article.
%
%

\bibliographystyle{abbrv}
\bibliography{opt_references}

%
%
%
%

\end{document}